\newtheorem{thm}{Theorem}
\newtheorem{Ex}{Example}
\newtheorem{prop}{Proposition}
\newtheorem*{main lemma}{The main lemma}
\newtheorem{definition}{Definition}
\title{Diagonal complexes}
\author{Joseph Gordon, Gaiane Panina}
\address{  J. Gordon: Mathematics and Mechanics Faculty, St. Petersburg State University,  joseph-gordon@yandex.ru; \ \ G. Panina: St. Petersburg department of Steklov institute of mathematics, Mathematics and Mechanics Faculty, St. Petersburg State University
 gaiane-panina@rambler.ru}
\keywords{Moduli space, ribbon graphs, curve complex,  associahedron, Chern class. \ \   MSC  52B70, 32G15\ \ UDK 515.164.2 }
\begin{document}

\begin{abstract} {Given an $n$-gon, the poset of all  collections of pairwise non-crossing diagonals
 is isomorphic to the face poset of some convex  polytope  called \textit{associahedron}. We  replace in this setting the $n$-gon (viewed as a disc with $n$ marked points on the boundary) with an arbitrary oriented surface with a number of labeled marked points ("vertices").
With appropriate definitions we arrive at  cell complexes   $\mathcal{D}$ (generalization of)  and its barycentric subdivision $\mathcal{BD}$.  The complex $\mathcal{D}$  generalizes  the associahedron.}
 If the surface is closed, the complex $\mathcal{D}$ (as well as $\mathcal{BD}$) is homotopy equivalent to  the space of metric ribbon graphs $RG_{g,n}^{met}$, or, equivalently, to the decorated moduli space $\widetilde{\mathcal{M}}_{g,n}$.  For bordered surfaces, we prove the following: (1) Contraction of a boundary  edge  does not change the homotopy type of the support of the complex. (2)
Contraction of a boundary component to a new marked point yields a forgetful map between two diagonal complexes which is homotopy equivalent to the  Kontsevich's tautological circle bundle. Thus, contraction of a boundary component gives a natural simplicial model for the  tautological  bundle. As an application, we compute the  \textit{psi-class}, that is,  the first Chern class in combinatorial terms. The latter result is an application of the local combinatorial formula. (3) In the same way, contraction of several boundary components corresponds to Whitney sum of the tautological bundles.

\end{abstract}

\maketitle

\section{Introduction}\label{SectIntro}

We introduce and study  complexes of pairwise non-intersecting curves on an oriented surface (called \textit{diagonals}). Their endpoints belong (by definition) to some fixed set of  labeled marked points (called \textit{vertices}). On the one hand, the complexes generalize the \textit{ associahedron} (or \textit{Stasheff  polytope}). On the other hand, they are directly related to the spaces of metric ribbon graphs (and therefore, to the moduli spaces of punctured algebraic curves), and total spaces of Kontzevich's tautological circle bundles.

\subsection*{Associahedron, \cite{Sta} }  Assume that $n>2$ is fixed. Two diagonals in a convex $n$-gon are \textit{non-intersecting} if they  intersect  only at their endpoints (or do not intersect at all). The set of all  collections of pairwise non-intersecting diagonals in the $n$-gon  is partially ordered by reverse inclusion.
It was shown by John Milnor that the poset is isomorphic to the face poset of some convex $(n-3)$-dimensional polytope $As_n$ called \textit{associahedron}.

In particular, the vertices of the associahedron $As_n$ correspond to the triangulations of the $n$-gon; the edges correspond to edge flips in which one of the diagonals is removed  and replaced by a (uniquely defined) different diagonal.
Single diagonals  are in a bijection with facets of $As_n$, and  the empty set corresponds to the entire  $As_n$.

There exist many  explicit constructions of the associahedron: as a special instance of secondary polytope, truncation of simplex, etc.

There exist also many ways to meaningfully generalize the associahedron. In the present paper, following \cite{Harer} and \cite{Akhmedov},   we consider one more way of generalization.

\subsection*{Metric ribbon graphs, \cite{MP}}
  A\textit{ ribbon graph} is a connected graph (possibly with multiple edges and loops)  together with
a cyclic ordering on the set of germs of edges incident to each vertex. Besides, we assume that each vertex of a ribbon graph has at least three emanating  germs of edges.  A ribbon graph yields an oriented surface, whose genus is called the genus of the graph.   A ribbon graph $\Gamma$ becomes a metric ribbon graph after attaching a positive number $l_i$  to each of its edges $d_1,...,d_d$. Thus isomorphic classes of ribbon graphs label the cells
of the space of metric  ribbon  graphs $RG^{met}_{g,n}$.
It is known due to Harer, Mumford, Thurston, and  Penner that  the space $RG^{met}_{g,n}$ of metric  ribbon  graphs with $n$ faces and genus $g$ can be identified with the decorated moduli space of complex curves of genus $g$ with $n$ distinct labeled marked points. The latter equals the product of the moduli space with the positive cone: $\widetilde{\mathcal{M}}_{g,n}=\mathcal{M}_{g,n}\times \mathbb{R}^n_{>0}$.

By definition (see \cite{Kon}), the tautological  complex line bundle on $\mathcal{M}_{g,n}$ has the cotangent space $T_{v_i}^*C$ at the marked point $v_i$ as the fiber over $(C,v_1,...,v_n) \in \mathcal{M}_{g,n}$. The associated circle bundle\footnote{A necessary reminder:  by a circle bundle we mean a bundle whose fiber is an oriented circle; (isomorphic classes of) complex line bundles correspond bijectively to circle bundles.} on $RG^{met}_{g,n}$ has the $i$-th boundary component of the graph (considered as a metric circle) as the fiber over a point  $(\Gamma, l_1,...,l_d)\in RG^{met}_{g,n}$.

The \textit{Chern classes  }  of the tautological bundles and their products are of a particular interest,  see \cite{Kon},\cite{LanZvo} for a detailed discussion and for expression of the Chern class as a differential $2$-form. In the present paper we apply  N.Mnev and G. Sharygin's local combinatorial formula   and explicitly represent the Chern classes  as  cochains.

\subsection*{Curve  complexes}  Curve complexes (or arc complexes) exist in the literature in different frameworks and settings, see \cite{Thur} and \cite{Harvey} for pioneer papers.    Oversimplifying, the basic idea is to take a (possibly bordered) surface with a finite set of labeled distinguished points, and to associate a complex with the ground set (that is, the set of vertices) equal to  homotopy classes of either closed curves, or (depending on the setting) curves with endpoints in the distinguished set. Simplices correspond to non-intersecting representatives of the homotopy classes.
The  mapping class group has a natural subgroup acting  on the complex, so it makes sense to take the quotient space.

An interesting part of the quotient complex corresponds to collections of curves that cut the surface into a number of disks. The latter is the subject of the present paper.

The existing literature on curve complexes is quite large, since the latter proved to be related to different areas: cluster algebras, low-dimensional dynamical systems, Teichmuller spaces, moduli  spaces of punctured complex curves, measured foliations, and many others.
 Throughout the paper we mention  J.L. Harer's paper \cite{Harer}, where the subject of the paper (the diagonal complex) together with barycentric subdivision appears for the first time.  We also mention R.C. Penner's paper \cite{Pen1} with very similar construction, where he classifies all the cases when the complex is sphere homeomorphic,  and N. Ivanov's survey \cite{Ivanov} with an extension of Thurston's original ideas.
{\textit{Weighted arc families} (in the present paper they correspond to metric diagonal arrangements) appear in the literature diversely, in particular, in relation with measured foliations, e.g. \cite{KaufPenn}. }

\subsection*{Discrete Morse theory}   Discrete Morse theory (developed by R. Forman  \cite{Forman1}, \cite{Forman2}) is a useful technical tool to be used in the paper.
Assume we have a regular cell complex.  A \textit{discrete Morse function}  is an acyclic matching on the Hasse diagram of the complex.
It gives a way of contracting all the cells of the complex that are matched:  if a cell $\sigma$ is matched with its facet\footnote{that is, a cell of dimension  $dim(\sigma)-1$ lying on the boundary of $\sigma$.}  $\sigma'$, then these two can be contracted by pushing $\sigma'$  inside $\sigma$. Acyclicity  guarantees that if we have many matchings at a time, one can consequently perform the contractions. The order of contractions does not matter, and one arrives at a complex homotopy equivalent to the initial one.

\subsection*{Main results of the paper} To single out the complexes that are studied in the present paper, we call them \textit{diagonal complexes}.

With a surface $F$ with a number of labeled marked points, we associate two (explicitly constructed) cell complexes: the complex   $\mathcal{D}$ and  its barycentric subdivision  $\mathcal{BD}$  (Section  \ref{SectMainConstr}).  For closed surfaces, these complexes appeared in a slight disguise in J.L. Harer's paper \cite{Harer}; however, for the sake of the completeness, we present here our construction which is appropriate for consequent paragraphs.

If the surface $F$ has no boundary, and under some other  condition of stability,  $\mathcal{BD}$ (as well as  $\mathcal{D}$) is homotopy equivalent to $RG^{met}_{g,n}$.
Moreover, in this case $\mathcal{BD}$ is a subcomplex of barycentric subdivision of $RG^{met}_{g,n}$  (Section  \ref{SecRelation}).  This result is also contained in \cite{Harer}.

 In the present paper we prove the following:
\begin{enumerate}
  \item The homotopy type of $\mathcal{BD}$ (as well as  $\mathcal{D}$) does not depend on the number $n_i$ of points on a boundary component, provided that $n_i>0$ (Section  \ref{SecContrEdge}).
  \item Contraction of  a boundary component $B_i$ to a new marked point induces a natural forgetful map $\mathcal{BD}({F})\rightarrow \mathcal{BD}(\overline{F})$  which is shown to be isomorphic to the tautological $S^1$-bundle $L_i$, where
 $\overline{F}$ is the surface $F$ with contracted $B_i$ (Section \ref{SecContrBound}). If $\overline{F}$ is a closed surface, the tautological bundle is the Kontsevich's tautological bundle studied in \cite{Kon}.
 As an application, we compute the powers of the first Chern class of the tautological circle bundle in combinatorial terms.
  \item
 Contraction of several boundary components corresponds to Whitney sum of the tautological bundles.
\end{enumerate}

 Summarizing (1), (2), and (3), we have  a complete characterization of the homotopy type of the complex  $\mathcal{BD}$  in terms of $RG^{met}_{g,n}$
 and Whitney sums of tautological bundles.

\subsection*{Acknowledgement} This research is
 supported by the Russian Science Foundation under grant 16-11-10039.

 We are also indebted to Peter Zograf and Max Karev for useful remarks.

\section{Main construction and introductory examples}\label{SectMainConstr}

Assume that we have an oriented surface $F$ of genus $g$ with $b$ labeled boundary components $B_1,...,B_b$.  We fix  $n$ distinct labeled points on $F$ not lying on the boundary. Besides, for each $i=1,..,b$  we fix $n_i>0$ distinct labeled points on  the  boundary component $B_i$.  We assume that $F$ can be triangulated with vertices at the marked points. That is, we exclude all ''small'' cases (like sphere with two marked points).

 Altogether we have $N=n+\sum_{i=1}^b n_i$ marked points; let us call them  \textit{vertices} of $F$. The vertices not lying on the boundary are called \textit{free vertices}. The vertices that lie on the boundary split the boundary components into \textit{edges}.

A \textit{pure diffeomorphism}  $F \rightarrow F$ is an orientation preserving  diffeomorphism which maps fixed points
to fixed points and preserves the labeling.  Therefore, a pure diffeomorphism
maps each boundary component to itself. The \textit{pure mapping class group} $PMC(F)$ is the group of isotopy classes of pure diffeomorphisms.

A \textit{diagonal}  is a simple (that is, not self-intersecting) smooth curve  $d$  on $F$ whose endpoints are some of the (possibly the same) vertices  such that\begin{enumerate}
 \item $d$ contains no vertices (except for the endpoints).
                     \item $d$ does not intersect the boundary (except for its endpoints),
                      \item $d$ is not  homotopic to an edge of the boundary.

                      Here and in the sequel, we mean homotopy with fixed endpoints in the complement  of the vertices $F \setminus \  Vert$.  In other words, a homotopy never hits a vertex.
                     \item $d$ is non-contractible.

                   \end{enumerate}

An \textit{admissible diagonal arrangement} (or an \textit{admissible arrangement}, for short) is a non-empty collection of diagonals $\{d_j\}$ with the properties:

\begin{enumerate}
\item Each free vertex is an endpoint of some diagonal.
  \item No two diagonals intersect (except for their endpoints).
  \item No two diagonals are homotopic.
  \item The complement of the arrangement and the boundary components $(F \setminus \bigcup d_j) \setminus \bigcup B_i$  is a disjoint union of open disks.
\end{enumerate}

\begin{definition}\label{DefEquivArr}
Two arrangements  $A_1$ and $A_2$ are \textit{strongly equivalent}  whenever there exists a homotopy taking $A_1$ to $A_2$.

Two arrangements  $A_1$ and $A_2$ are \textit{weakly equivalent}  whenever there exists a pure diffeomorphism of $F$ which maps bijectively $A_1$ to $A_2$.
\end{definition}

\medskip

\textbf{Remark.}  If there are no boundary components, weak equivalence classes of admissible arrangements correspond bijectively to ribbon graphs from $RG_{g,n}$.

\medskip

Arrangements with maximal possible number of diagonals  $Max=6g+3b+2n+N-6$  correspond to triangulations of $F$ with vertices at fixed points. Here  by triangulation we mean that the disks of the complement are combinatorial triangles, but possibly self-intersecting on the boundary.  Arrangements with minimal possible number of diagonals $Min= 2g+n+b-1$  have a unique disc in the complement.

A triple $(g,b,n)$ is \textit{stable
} if no admissible arrangement has  a non-trivial automorphism (that is, each pure diffeomorphism which maps an arrangement to itself, maps each germ of each of $d_i$ to itself). Triples with $b>1$ are stable since a boundary component allows to set a linear ordering on the germs of diagonals emanating from each of its vertices.
It is known\footnote{This follows from Lefschetz fixed point theorem, as explained  by Bruno Joyal in personal communications.} that any triple with $n>2g+2$ is stable.

Throughout the paper we assume that \textbf{all  the triples are stable}.

\medskip

\subsection*{Poset $\widetilde{D}$ and cell complex $\widetilde{\mathcal{D}}$.}

Strong equivalence classes of admissible arrangements are partially ordered by reversed inclusion: we say that $A_1 \leq A_2$ if there exists a homotopy that takes the arrangement $A_2$ to some subarrangement of $A_1$.

Thus  for the data  $ (g, b , n; n_1,...,n_b)$ we have the poset of all strong equivalence classes of admissible arrangements  $\widetilde{D}={\widetilde{D}}_{g, b , n; n_1, \ldots,n_b}$.

\begin{Ex}\label{ExAssoc}  The poset   ${\widetilde{D}}_{0, 1,0; n_1}$  is isomorphic to the face poset of the associahedron $As_{n_1}$.
In this case any collection of pairwise non-intersecting diagonals is admissible.
 \end{Ex}

In view of this example we are going to generalize associahedron. The surface $F$ plays the role of the polygon, marked points play the role of  vertices.

The poset $\widetilde{D}$ can be realized as the poset of some (uniquely defined)  regular\footnote{
A cell complex $K$ is \textit{regular} if each  $k$-dimensional cell $c$ is attached to some  subcomplex of
the $(k-1)$-skeleton of $K$ via a bijective mapping on $\partial c$.} cell complex $\widetilde{\mathcal{D}}$.
Indeed, let us build up  $\widetilde{\mathcal{D}}$  starting from the cells of maximal dimension. Each such cell corresponds to cutting of the surface $F$ into a single polygon. Adding more diagonals reduces the general case to Example \ref{ExAssoc}. In other words,   $\widetilde{\mathcal{D}}$ is a patch of associahedra.

 For the most examples, $\widetilde{\mathcal{D}}$ has infinitely many cells. Our goal is to factorize $\widetilde{\mathcal{D}}$ by the action of the pure mapping class group. For this purpose consider the defined below barycentric subdivision of $\widetilde{\mathcal{D}}$.

\subsection*{Poset $\widetilde{BD}$ and cell complex $\widetilde{\mathcal{BD}}$.} We apply now the construction of the \textit{order complex} \cite{Wachs} of a poset, which gives us barycentric subdivision.
  Each  element of the  poset ${\widetilde{BD}}_{g, b , n; n_1,... ,n_b}$ is (the strong equivalence class of) some admissible arrangement $A=\{d_1,...,d_m\} $ with a linearly ordered partition $A=\bigsqcup S_i$   into some non-empty sets  $S_i$  such  that the first set $S_1$ in the partition  is an admissible arrangement.

The partial order on $\widetilde{BD}$ is generated by the following rule:
 \newline $(S_1,...,S_p)\leq (S'_1,...,S'_{p'})$ whenever one of the two conditions holds:
\begin{enumerate}
  \item We have one and the same arrangement  $A$, and $(S'_1,...,S'_{p'})$ is an order preserving refinement of $(S_1,...,S_p)$.
  \item $p\leq p'$, and for all $i=1,2,...,p$, we have  $S_i=S'_i$. That is, $(S_1,...,S_p)$ is obtained from $(S'_1,...,S'_{p'})$ by removal
  $S'_{p+1},...,S'_{p'}$.
\end{enumerate}

Let us look at the incidence rules in more details. Given $(S_1,...,S_p)$, to list all the elements of $\widetilde{BD}$ that are smaller than $(S_1,...,S_p)$  one has
(1) to eliminate some (but not all!) of $S_i$ from the end of the string, and (2) to replace some consecutive collections of sets by their unions.

\medskip

Examples:$$(\{d_5,d_2\},\{d_3\},\{d_1,d_6\},\{d_4\},\{d_7\},\{d_8\})\ \  >\ \ (\{d_5,d_2\},\{d_3,d_1,d_6\},\{d_4,d_7\}).$$
$$(\{d_5,d_2\},\{d_3\},\{d_1,d_6\},\{d_4\},\{d_7\},\{d_8\})\ \  >\ \ (\{d_5,d_2\},\{d_3\},\{d_1,d_6\},\{d_4\},\{d_7\}).$$
$$(\{d_5,d_2\},\{d_3\},\{d_1,d_6\},\{d_4\},\{d_7\},\{d_8\})\ \  >\ \ (\{d_5,d_2\},\{d_3\},\{d_1,d_6\},\{d_4\},\{d_7,d_8\}).$$

Minimal elements of $\widetilde{BD}$ correspond to admissible arrangements. Maximal elements correspond to maximal arrangments  $A$ together with some minimal admissible subarrangement $A'\subset A$ and a linear ordering on the set $A\setminus A'$. For maximal elements, the number of sets in the partition  $p=Max-Min+1$.

By construction, the complex  $\widetilde{\mathcal{BD}}$  is combinatorialy isomorphic to the barycentric subdivision of $\widetilde{\mathcal{\mathcal{D}}}$.

We are mainly interested in the quotient complex:

\begin{definition}
  For a fixed data $(g, b , n; n_1,... ,n_b)$, the diagonal  complex $\mathcal{BD}_{g, b , n; n_1,... ,n_b}$ is defined as

  $$\mathcal{BD}=\mathcal{BD}_{g, b , n; n_1,... ,n_b}:=\mathcal{\widetilde{BD}}_{g, b , n; n_1,... ,n_b}/PMC(F).$$
  We define also
   $$\mathcal{D}=\mathcal{{D}}_{g, b , n; n_1,... ,n_b}:=\mathcal{\widetilde{D}}_{g, b , n; n_1,... ,n_b}/PMC(F).$$
\end{definition}
 Alternative definition reads as:

 \begin{definition}
   Each cell of the complex ${{\mathcal{BD}}}_{g, b , n; n_1,... ,n_b}$ is labeled by the weak equivalence class of some admissible arrangement $A=\{d_1,...,d_m\} $ with a linearly ordered partition $A=\bigsqcup S_i$   into some non-empty sets  $S_i$  such  that the first set $S_1$ in the partition  is an admissible arrangement.

The incidence rules  are the same as the above rules for the complex $\widetilde{\mathcal{BD}}$.
 \end{definition}

\begin{prop}  The cell complex $\mathcal{BD}$ is regular. Its cells are combinatorial simplices.

\end{prop}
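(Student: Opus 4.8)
The plan is to reduce the proposition to the evident corresponding fact for $\widetilde{\mathcal{BD}}$ and then to control what the quotient map $q\colon\widetilde{\mathcal{BD}}\to\mathcal{BD}=\widetilde{\mathcal{BD}}/PMC(F)$ does to a single closed cell. Upstairs there is nothing to prove: $\widetilde{\mathcal{BD}}$ is, by construction, the barycentric subdivision of the regular cell complex $\widetilde{\mathcal{D}}$, hence a genuine simplicial complex --- its vertices are the cells of $\widetilde{\mathcal{D}}$, a $k$-cell is a flag $c_0<c_1<\dots<c_k$ in the face poset $\widetilde{D}$, and the faces of such a cell are exactly its subflags. Two bookkeeping remarks about dimension will carry the argument: (i) the dimension of a cell $c$ of $\widetilde{\mathcal{D}}$ equals the common length of the maximal chains of $\widetilde{D}$ descending from $c$ --- each step adjoins one diagonal, and every triangulation has $Max$ diagonals --- so $\dim c$ is an invariant of the poset $\widetilde{D}$ alone; (ii) along any flag the dimension strictly increases, so a flag contains at most one cell of each dimension.

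Next I would record that $PMC(F)$ acts compatibly on everything: a pure diffeomorphism sends admissible arrangements to admissible arrangements and respects strong equivalence and reverse inclusion, so it acts on $\widetilde{D}$ by poset automorphisms; the induced simplicial action on $\widetilde{\mathcal{BD}}$ then preserves, by (i), the dimension of each cell of $\widetilde{\mathcal{D}}$, i.e.\ of each vertex of $\widetilde{\mathcal{BD}}$. By definition the cells of $\mathcal{BD}$ are the $PMC(F)$-orbits of simplices of $\widetilde{\mathcal{BD}}$, and the closure of the cell $q(\sigma)$ is $q(\bar\sigma)$.

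The crux is to show that $q$ restricts to an injection on every closed simplex $\bar\sigma$ of $\widetilde{\mathcal{BD}}$. Suppose $q(x)=q(y)$ with $x,y\in\bar\sigma$, and choose $g\in PMC(F)$ with $gx=y$. Let $\tau_1,\tau_2\le\sigma$ be the faces whose relative interiors contain $x$ and $y$ respectively; since $g$ is a simplicial homeomorphism it carries the relative interior of $\tau_1$ onto that of $g\tau_1$, and $y$ lies in the relative interiors of both $g\tau_1$ and $\tau_2$, which forces $g\tau_1=\tau_2$. Thus $g$ maps the vertex set of $\tau_1$ bijectively onto that of $\tau_2$, and both vertex sets are sub-collections of the flag $c_0<\dots<c_k$ constituting $\sigma$. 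Since $g$ preserves the dimension of each cell of $\widetilde{\mathcal{D}}$ while, by (ii), the $c_i$ have pairwise distinct dimensions, each vertex $c_{i_\ell}$ of $\tau_1$ is forced to the unique cell among $c_0,\dots,c_k$ of dimension $\dim c_{i_\ell}$, namely to $c_{i_\ell}$ itself. Hence $g$ fixes every vertex of $\tau_1$, so it restricts to the identity on $\bar\tau_1$, and $y=gx=x$. Consequently $q|_{\bar\sigma}$ is a continuous bijection of the compact simplex $\bar\sigma$ onto $q(\bar\sigma)$, hence a homeomorphism; therefore $\overline{q(\sigma)}=q(\bar\sigma)$ is a combinatorial simplex with the face lattice of $\sigma$, and --- $q$ being dimension preserving --- it maps $\partial\sigma$ into the lower skeleton. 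These closed cells, with the quotient topology, assemble into a regular CW complex in the standard way once one knows $q$ is injective on closed cells, which is exactly the assertion.

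I expect the only genuine (and minor) obstacle to lie in the dimension bookkeeping (i)--(ii): one must be certain that ``dimension of a cell of $\widetilde{\mathcal{D}}$'' is intrinsic to the poset $\widetilde{D}$, so that $PMC(F)$ has no option but to preserve it, and that dimension strictly increases along flags. Both facts fall straight out of the description of $\widetilde{\mathcal{D}}$ as a patch of associahedra, in which passing from a cell to one of its faces adjoins diagonals to the corresponding arrangement; I would make this explicit once and for all. Everything else is routine; the stability hypothesis on $(g,b,n)$ is not directly used in this particular argument, serving here only as one of the standing conventions of the paper (while being indispensable further on).
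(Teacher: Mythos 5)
Your proof is correct, and it follows a genuinely different route from the paper's. You work upstairs: you observe that $\widetilde{\mathcal{BD}}$ is the order complex of $\widetilde{D}$, that $PMC(F)$ acts on it simplicially preserving the number of diagonals (equivalently the dimension of the corresponding cell of $\widetilde{\mathcal{D}}$), and that the vertices of any flag have pairwise distinct diagonal counts; hence any $g$ carrying a face of a simplex $\sigma$ to a face of $\sigma$ must fix the relevant vertices and therefore act as the identity on that face, so the quotient map is injective on every closed simplex. This is essentially the standard rigidity argument for group actions on barycentric subdivisions (the ``no vertex of a simplex is moved to another vertex of the same simplex'' condition), and, as you note, it does not invoke the stability hypothesis at all --- so it in fact proves the statement in greater generality. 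The paper instead argues downstairs, with cells labeled by weak equivalence classes of arrangements with ordered partitions: it shows that every face relation $(S_1,\dots,S_r)\le(S_1',\dots,S_{r'}')$ is realized by a \emph{unique} (up to isotopy) order-preserving embedding of the smaller arrangement into the larger one, and this uniqueness is exactly where stability (absence of nontrivial automorphisms of admissible arrangements) enters. What each approach buys: yours is shorter, more robust, and isolates the purely combinatorial mechanism (a preserved rank function on the poset); the paper's, while sketchier, produces the uniqueness-of-embedding statement in the form in which regularity is implicitly reused later (in the fiberwise analysis of the forgetful maps in Sections \ref{SecContrEdge}--\ref{SecContrBound}), which your argument does not directly supply. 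Your dimension bookkeeping (i)--(ii) is fine, though it can be shortcut: a pure diffeomorphism obviously preserves the number of diagonals of an arrangement, which is all the rank preservation you need.
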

Proof.   If $(S_1,...,S_r)\leq (S'_1,...,S'_{r'})$ then there exists a unique (up to isotopy)  order-preserving  pure diffeomorphism of $F$ which  embeds \newline  $A= S_1\cup ...\cup S_r$ in  $A'=S'_1\cup ...\cup S'_{r'}$.
Indeed,   If $S_1=S_1'$,  the arrangement $S_1$  maps identically to itself since it has no automorphisms by stability assumption. The rest of the diagonals are diagonals in polygons, and are uniquely defined by their endpoints. Assume that $S_1 \subset S_1'$.
For the rest of the cases it suffices to take $A=S_1$, $A'=A =S_1'\bigsqcup S_2'$. If $A$ embeds in $A'$ in different ways,
then $A$ has a non-trivial isomorphism, which contradicts stability assumption.
\qed

\medskip
 The complex $\mathcal{D}=\mathcal{\widetilde{D}}_{g, b , n; n_1,... ,n_b}/PMC(F)$ is usually non-regular, see the below examples.

 \begin{Ex}\label{ExSimplest}\begin{enumerate}
$\mathcal{D}_{0,2,0;1,1}$ is  a combinatorial circle. It has one vertex and one edge. $\mathcal{BD}_{0,2,0;1,1}$, which is also a combinatorial circle, has two vertices and two edges, see Fig \ref{Fig1}.
           \end{enumerate}
 \end{Ex}

\begin{Ex}\label{ExCylinder}\begin{enumerate}
$\mathcal{D}_{0,2,0;1,2}$ is  the cylinder $I\times S^1$. It has four vertices, six edges, and two pentagonal cells, see Fig \ref{Fig2}.  Each of the pentagonal cells patches to itself by an edge.
           \end{enumerate}
 \end{Ex}

\begin{figure}[h]
\centering \includegraphics[width=8 cm]{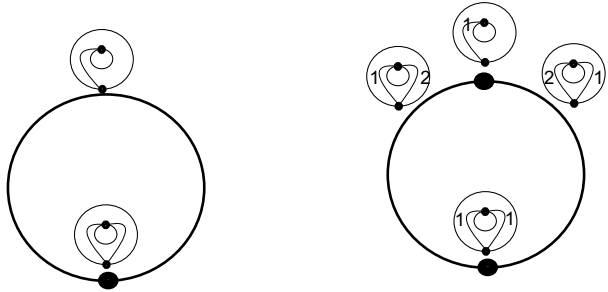}
\caption{Complexes $\mathcal{D}_{0,2,0;1,1}$ and  $\mathcal{BD}_{0,2,0;1,1}$}\label{Fig1}
\end{figure}

\begin{figure}[h]
\centering \includegraphics[width=8 cm]{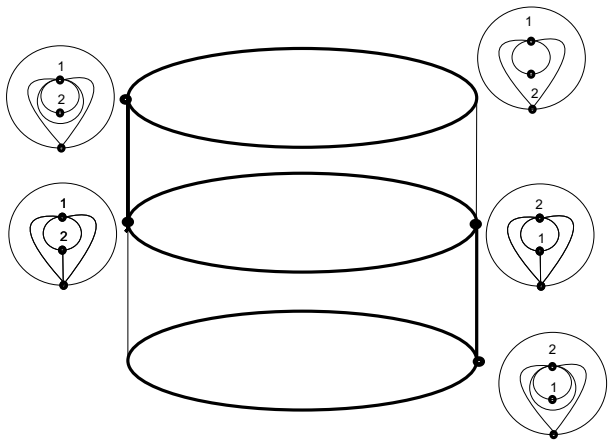}
\caption{Complex $\mathcal{D}_{0,2,0;2,1}$. We depict labels of all the vertices and one of the edges. This figure (without labels) appeared in \cite{Akhmedov}}\label{Fig2}
\end{figure}

\medskip

\medskip

\section{ Relation to $RG^{met}_{g,n}$. Attaching lengths.}\label{SecRelation}
\textbf{Starting from now, by an admissible arrangement we mean the weak equivalence class of an admissible arrangement.}

Admissible arrangements bijectively correspond to ribbon graphs  by graph duality. Contracting an edge in a ribbon graph corresponds to eliminating the dual diagonal from the dual arrangement.

\begin{thm} \begin{enumerate}
              \item $RG^{met}_{g,n}$ (considered as a topological space) is homotopy equivalent to $\mathcal{BD}_{g,o,n}$ (and therefore also to $\mathcal{D}_{g,o,n}$).
              \item $\mathcal{BD}_{g,o,n}$ embeds in (the analog of) barycentric subdivision of $RG^{met}_{g,n}$ as a deformation retract.
            \end{enumerate}

\end{thm}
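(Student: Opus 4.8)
The plan is to exhibit an explicit order-preserving correspondence between the cells of $\widetilde{\mathcal{BD}}_{g,0,n}$ and a subcollection of simplices in the barycentric subdivision of $RG^{met}_{g,n}$, and then to argue that this subcomplex is a deformation retract. First I would recall that, by graph duality (stated in the paragraph just above the theorem), a weak equivalence class of admissible arrangements $A=\{d_1,\dots,d_m\}$ corresponds to a ribbon graph $\Gamma_A$ of genus $g$ with $n$ faces: the disks of the complement of $A$ are the vertices of $\Gamma_A$, the diagonals $d_j$ are the edges, and the free vertices of $F$ sit inside the faces. The condition that each free vertex is an endpoint of some diagonal and that the complement is a union of disks translates exactly into $\Gamma_A$ being a legitimate ribbon graph all of whose vertices have valence $\geq 3$. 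Removing a diagonal from $A$ corresponds to contracting the dual edge of $\Gamma_A$, so the poset of admissible arrangements ordered by reverse inclusion maps to the poset of ribbon graphs under edge contraction, i.e. to the face poset of $RG^{met}_{g,n}$; the caveat is that $RG^{met}_{g,n}$ has cells for \emph{all} ribbon graphs, whereas arrangements only give those $\Gamma$ admitting the required triangulation refinement — but any ribbon graph of the right $(g,n)$ does, since one can always subdivide a polygon into triangles, so in fact the map on top-dimensional data is onto. What the arrangement poset genuinely omits is handled below.

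Next I would make the barycentric-subdivision statement precise. A cell of $\widetilde{\mathcal{BD}}_{g,0,n}$ is an arrangement $A$ with a linearly ordered partition $A=S_1\sqcup\cdots\sqcup S_p$ whose first block $S_1$ is itself admissible. Dually, writing $A'=S_1\subsetneq S_1\cup S_2\subsetneq\cdots\subsetneq A$, this is a flag of ribbon graphs $\Gamma_A=\Gamma_0\to\Gamma_1\to\cdots\to\Gamma_{p-1}=\Gamma_{A'}$ obtained by successive edge contractions, terminating at a ribbon graph $\Gamma_{A'}$ that is still ``fat enough'' to correspond to an admissible (minimal) arrangement. A chain in the face poset of $RG^{met}_{g,n}$ is exactly a flag of ribbon graphs under contraction, and such a chain is a simplex of the barycentric subdivision $\mathrm{Sd}\,RG^{met}_{g,n}$. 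Thus $\widetilde{\mathcal{BD}}_{g,0,n}$ is identified with the subcomplex of $\mathrm{Sd}\,RG^{met}_{g,n}$ consisting of flags whose \emph{bottom} vertex is a ribbon graph dual to an admissible arrangement — equivalently, whose bottom is not ``too degenerate.'' Quotienting both sides by $PMC(F)$, which on the ribbon-graph side is precisely the identification of isomorphic ribbon graphs built into $RG^{met}_{g,n}$, gives the claimed embedding $\mathcal{BD}_{g,0,n}\hookrightarrow \mathrm{Sd}\,RG^{met}_{g,n}$, and (2) reduces to showing this inclusion is a homotopy equivalence; (1) then follows from (2) together with the classical identification $RG^{met}_{g,n}\simeq\widetilde{\mathcal M}_{g,n}$.

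To prove the inclusion is a deformation retract, I would use discrete Morse theory on the complementary cells, as advertised in the introduction. The simplices of $\mathrm{Sd}\,RG^{met}_{g,n}$ not in $\mathcal{BD}_{g,0,n}$ are flags $\Gamma_0\to\cdots\to\Gamma_k$ whose bottom term $\Gamma_k$ is a ribbon graph which is \emph{not} dual to an admissible arrangement, i.e. for which the corresponding collection of diagonals leaves some free vertex untouched or, after dualizing, the graph is ``too small'': its dual has fewer than the minimal admissible number of diagonals, or violates connectivity of the complement in disks. I would construct an acyclic matching on exactly these complementary cells by pairing a flag $(\Gamma_0\to\cdots\to\Gamma_k)$ with $(\Gamma_0\to\cdots\to\Gamma_k\to\widehat\Gamma)$ where $\widehat\Gamma$ is a canonically chosen further contraction of $\Gamma_k$ (e.g. contract the edge incident to the lexicographically smallest vertex-germ among those whose contraction keeps us in the forbidden region), with the inverse operation being deletion of the last term when it is such a canonical contraction. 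Checking that this matching is well-defined, covers every forbidden cell, and is acyclic — so that all forbidden cells collapse, leaving $\mathcal{BD}_{g,0,n}$ as a deformation retract of $\mathrm{Sd}\,RG^{met}_{g,n}\simeq RG^{met}_{g,n}$ — is the technical heart of the argument. The main obstacle I anticipate is precisely verifying acyclicity and completeness of this matching: one must ensure the ``canonical contraction'' rule is consistent across the whole complement region (in particular that the forbidden region is closed under the contraction used, so matched cells are again forbidden) and that no closed alternating cycle of matched and unmatched edges can form, which typically requires setting up a suitable rank or weight function on ribbon graphs that strictly decreases along the matching.
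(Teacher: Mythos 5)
Your first half---the duality between admissible arrangements and ribbon graphs with all vertex valences at least $3$, and the identification of a cell of $\mathcal{BD}_{g,0,n}$ (an arrangement with an ordered partition whose first block is admissible) with a flag of ribbon graphs under edge contraction, i.e.\ with a simplex of the barycentric subdivision spanned by barycenters of cells of $RG^{met}_{g,n}$---is exactly the comparison the paper has in mind (the paper itself only sketches this and refers to Harer and to Dunin-Barkowski et al.\ for details). The gap is in your retraction step, and it begins with a misidentification of the complement of $\mathcal{BD}$ inside $RG^{met}_{g,n}$. Every cell of $RG^{met}_{g,n}$ (a ribbon graph of genus $g$ with $n$ labelled faces and valences $\geq 3$) is dual to an admissible arrangement, so there are no flags of cells of $RG^{met}_{g,n}$ ``whose bottom term is not dual to an admissible arrangement'': the degenerate bottoms you describe (non-filling duals, too few edges, complement not a union of discs) are not cells of $RG^{met}_{g,n}$ at all---they are the missing faces at infinity, where a subgraph containing a cycle collapses, equivalently non-filling arc systems. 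Consequently the part of $RG^{met}_{g,n}$ not covered by $\mathcal{BD}$ is not a union of simplices of any subdivision lying inside $RG^{met}_{g,n}$; it is a union of half-open simplices missing precisely those faces. Discrete Morse theory has nothing to match there: there are no closed ``forbidden cells'' inside $RG^{met}_{g,n}$ to pair and collapse. This is exactly the point the paper flags when it says $RG^{met}_{g,n}$ is not a cell complex in Hatcher's sense.

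If instead you adjoin the missing faces and work in the natural compactified object (the quotient arc complex, from which $RG^{met}_{g,n}$ is obtained by deleting the subcomplex of non-filling arc systems), then an acyclic matching collapsing all simplices that meet the non-filling part would, by Forman's theory, prove that the \emph{compactified} complex is homotopy equivalent to $\mathcal{BD}$---a different statement, and in general a false one, since the deleted subcomplex (whose strata have infinite stabilizers, containing Dehn twists supported in the complementary subsurface) genuinely changes the homotopy type; the theorem concerns the open subspace $RG^{met}_{g,n}\simeq \mathcal{M}_{g,n}\times\mathbb{R}^n_{>0}$. The argument that actually works, and which underlies Harer's proof and the paper's sketch, is the classical deleted-subcomplex retraction: for a simplicial complex $X$ with subcomplex $X_\infty$, the open complement $|X|\setminus|X_\infty|$ deformation retracts onto the full subcomplex of $\mathrm{Sd}(X)$ spanned by barycenters of simplices not in $X_\infty$, via the straight-line flow scaling down the barycentric coordinates lying in $X_\infty$. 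Applied $PMC$-equivariantly (with the stability assumption controlling cell automorphisms) to the arc complex with $X_\infty$ the non-filling systems, this yields statement (2), and (1) follows since subdividing does not change the underlying space. So keep your duality/embedding analysis, but replace the discrete-Morse collapse of ``complementary cells'' by this retraction of half-open cells onto the spine.
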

Comments on the proof.
The theorem (in a slight disguise) was proven in \cite{Harer}. An independent proof is contained in \cite{DunBarkSh}. {This construction appears also in K. Igusa's category of fat graphs \cite{Igusa}.}
 The rough idea of the proof is to compare  $\mathcal{BD}_{g,o,n}$ with the barycentric subdivision of $RG^{met}_{g,n}$. Strictly speaking, $RG^{met}_{g,n}$ is not a cell complex in the sense of A. Hatcher's book \cite{Hatcher}, although it is
patched of the open balls. Each of the balls   correspond to some admissible arrangement of $m$ diagonals, see \cite{MP} for details.
However, it has a  well-defined barycentric subdivision  containing a natural embedding of the complex  $\mathcal{BD}_{g,o,n}$. \qed

\medskip

In view of the above construction, it is possible to attach lengths  to  diagonals  for any complex $\mathcal{BD}$, even when $F$ has boundary components.
This gives a \textit{metric arrangement}.
Namely, we have:
\begin{thm} \label{ThmLengths} The  support of $\mathcal{BD}_{g, b , n; n_1,... ,n_b}$ equals the space of
admissible arrangements equipped  with  a length function $$l:\{d_i\} \rightarrow \mathbb{R}_{>0},$$
which satisfies the two conditions:
\begin{enumerate}
  \item For each  $(A,l)$ the length function $l$ attains its maximal value on some admissible $A' \subseteq A$.
  \item  $\sum_{d_i \in A} l(d_i)=1$.
\end{enumerate}
Vanishing of $l(d_i)$ means eliminating the diagonal $d_i$. \qed
\end{thm}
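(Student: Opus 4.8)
The plan is to exhibit an explicit homeomorphism $\Phi$ between the support $|\mathcal{BD}|$ and the space of normalized metric arrangements, built one simplex at a time, and then to observe that conditions (1) and (2) describe exactly the image. Recall that a $(p-1)$-simplex of $\mathcal{BD}$ carries the label $(S_1,\dots,S_p)$, that $B_i:=S_1\cup\cdots\cup S_i$ is admissible for every $i$, that $B_p$ is the underlying arrangement, and that the vertices of this simplex are the $0$-cells $(B_1),\dots,(B_p)$. I read the barycentre of the vertex $(B_i)$ as the metric arrangement supported on $B_i$ with every diagonal of length $1/|B_i|$, and extend affinely: a point with barycentric coordinates $(t_1,\dots,t_p)$ (so $t_i\ge 0$, $\sum_i t_i=1$) is sent by $\Phi$ to the arrangement $A=B_p$ with length function $l(d)=\sum_{i\,:\,d\in B_i} t_i/|B_i|$.

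First I would check $\Phi$ is well defined. It is $PMC(F)$-invariant, hence descends to $\mathcal{BD}$; and it is compatible with face inclusions. Indeed, deleting the vertex $(B_i)$ from the simplex of $(S_1,\dots,S_p)$ yields the face labelled by $(S_1,\dots,S_{p-1})$ when $i=p$ (this drops the block $S_p$, shrinking $A$ to $B_{p-1}$) and by $(S_1,\dots,S_{i-1},S_i\cup S_{i+1},S_{i+2},\dots,S_p)$ when $i<p$ (this leaves $A$ unchanged); in both cases the formula for $l$ restricts correctly, so the simplexwise maps glue. Next I would verify the image lies in the asserted space. Telescoping gives $\sum_{d\in A} l(d)=\sum_i (t_i/|B_i|)\,|B_i|=\sum_i t_i=1$, which is (2). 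On the open simplex every $t_i>0$, so $l>0$ on all of $A$, matching the convention that vanishing of $l(d)$ means dropping $d$. Finally, a diagonal $d\in S_m$ has $l(d)=\sum_{i\ge m} t_i/|B_i|$, a proper subsum of the maximal value of $l$ unless $m=1$; hence on the open simplex the maximum of $l$ is attained exactly on $S_1=B_1$, which is admissible, so (1) holds.

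For the inverse map, take a metric arrangement $(A,l)$ satisfying (1) and (2), let $v_1>v_2>\cdots>v_p$ be the distinct values of $l$, and set $S_i:=l^{-1}(v_i)$. The set $S_1=\{d\in A:\ l(d)=\max l\}$ contains an admissible arrangement by (1); since admissibility of a sub-arrangement of $A$ is upward closed (enlarging a sub-arrangement preserves non-emptiness, absence of crossings and of homotopic pairs, the property that every free vertex is used, and the property that the complement is a union of disks), every $B_i=S_1\cup\cdots\cup S_i$ is admissible and $(S_1,\dots,S_p)$ is a legitimate cell label. The equations $v_m=\sum_{i\ge m} t_i/|B_i|$ form a triangular system whose unique solution is $t_m=|B_m|\,(v_m-v_{m+1})$ with $v_{p+1}:=0$; this is strictly positive because the $v_m$ strictly decrease, and $\sum_m t_m=\sum_{d\in A}l(d)=1$ by (2). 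One checks this is a two-sided inverse of $\Phi$, and that both directions are continuous (ordering the values of $l$ and solving a triangular linear system are continuous on each closed stratum), so $\Phi$ is the desired homeomorphism.

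The one place that needs care is the bookkeeping in the second paragraph: matching the facets of the simplex of $(S_1,\dots,S_p)$ with the two incidence rules defining $\widetilde{BD}$, so that the simplexwise maps really do assemble into a single continuous map on all of $|\mathcal{BD}|$. Granting this, the statement is essentially an unwinding of the definition of the barycentric subdivision: condition (1) records that the smallest block $S_1$ of the flag must itself be an admissible arrangement, and condition (2) is the normalization $\sum_i t_i=1$ of barycentric coordinates.
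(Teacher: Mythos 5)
Your proposal is correct and follows exactly the route the paper has in mind: the paper in fact gives no written proof (the statement is asserted as an immediate unwinding of the barycentric-subdivision/metric-cell picture coming from the comparison with $RG^{met}_{g,n}$), and your explicit simplexwise map $l(d)=\sum_{i:\,d\in B_i}t_i/|B_i|$ with its triangular inverse $t_m=|B_m|(v_m-v_{m+1})$ is precisely that standard identification, with the upward-closedness of admissibility correctly invoked to match condition (1) with the requirement that $S_1$ be admissible.
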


Note that in our setting, no length is attached to the edges of $F$.

\begin{definition}\label{DefTaut} Let $v_i$ be a free vertex. For a metric diagonal arrangement let $l_1,l_2,...,l_m$ be the lengths
 of diagonals emanating from $v_i$ \footnote{one and the same diagonal may appear twice.} coming in the counter clockwise order. The tautological circle bundle $L_i$  on $\mathcal{BD}$ is the bundle  whose fiber over
a metric arrangement is the (combinatorial) polygon with consecutive edge lengths $l_1,l_2,...,l_m$.
\end{definition}

If there are no boundary components, $L_i$ equals the tautological bundle introduced in \cite{Kon}.

\medskip

\textbf{Remark.} One can  relax the condition  $\sum l_i=1$ and define the length assignment to an arrangement as a point in the real projective space.  This will be convenient in the subsequent sections where we'll eliminate some of diagonals.

\medskip

\textbf{Remark.}
Although Theorem \ref{ThmLengths}  represents each simplex of the complex $\mathcal{BD}$ as some metric simplex,
for the consequent paragraphs a reader may imagine
 each (combinatorial) simplex in ${\mathcal{BD}}_{g, b , n; n_1,... ,n_b}$ as a (Euclidean) equilateral simplex and to
define the \textit{support}, or \textit{geometric realization} of the complex $| {\mathcal{BD}}|_{g, b , n; n_1,... ,n_b}= |{\mathcal{D}}|_{g, b , n; n_1,... ,n_b}$  as the patch of these simplices.

\section{Contraction of edges} \label{SecContrEdge}

\begin{thm} Homotopy type of the support \(|{\mathcal{D}}|_{g, b , n; n_1,... ,n_b}\ =\ |{\mathcal{BD}}|_{g, b , n; n_1,... ,n_b}\) depends only on the triple \((g,b,n)\).
\end{thm}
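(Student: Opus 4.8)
The plan is to reduce the general statement to a single elementary operation: removing one marked point from a boundary component $B_i$ on which $n_i > 1$ points lie, and showing this operation does not change the homotopy type of the support. Since we can pass from any configuration $(g,b,n;n_1,\dots,n_b)$ to any other with the same triple $(g,b,n)$ by a finite sequence of such single-point additions and removals (all $n_i$ remain positive throughout), it suffices to compare $|\mathcal{D}|_{g,b,n;\dots,n_i,\dots}$ with $|\mathcal{D}|_{g,b,n;\dots,n_i-1,\dots}$, and the same comparison for $\mathcal{BD}$ follows since barycentric subdivision preserves the support.

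First I would fix a marked point $p$ on $B_i$ and look at the edge $e$ of $B_i$ immediately clockwise from $p$, with other endpoint $q$. Deleting $p$ merges $e$ with the next boundary edge; combinatorially, on the level of arrangements, every diagonal with an endpoint at $p$ becomes a diagonal with an endpoint at $q$ (sliding the germ along $e$), possibly producing a diagonal homotopic to the merged boundary edge or to an existing diagonal, or becoming contractible. So there is a natural forgetful map $\pi$ on arrangements, hence a (non-injective, non-cellular in general) map of the posets $\widetilde D$, compatible with the $PMC$-action; the real content is that after taking the quotient and the supports, $\pi$ is a homotopy equivalence. I would realize this via discrete Morse theory on $\mathcal{BD}$ (which is regular with simplicial cells, by the Proposition): the diagonals incident to $p$ that are "redundant" after contraction (those that become boundary-homotopic, contractible, or duplicate another diagonal once the germ is slid to $q$) should be matched with their counterparts not containing them. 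Concretely, for each cell labeled by an ordered partition $(S_1,\dots,S_p)$ of an arrangement $A$, I would single out the canonical "collapsible" diagonal $d^*$ — the unique diagonal of $A$ incident to $p$ that is homotopic to the boundary arc $pq$ together with the adjacent edge, if such a diagonal exists and is in a block $S_j$ with $j>1$ — and match the cell with the one obtained by deleting $d^*$ (or, in the other direction, inserting it). One must check this matching is well-defined on weak equivalence classes, acyclic, and that the set of unmatched (critical) cells is exactly (the subdivision of) $\mathcal{D}_{g,b,n;\dots,n_i-1,\dots}$.

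The key steps, in order: (i) make precise the "sliding" bijection between diagonals at $p$ and diagonals at $q$, and classify which diagonals become inadmissible or redundant; (ii) define the candidate acyclic matching on the Hasse diagram of $\widetilde{\mathcal{BD}}$, equivariantly under $PMC(F)$, so it descends to $\mathcal{BD}$; (iii) verify acyclicity — the standard way is to exhibit the matching as "generated" by a single rule (insert/delete the distinguished diagonal $d^*$) so that any alternating cycle would have to oscillate in a way forbidden by a monotone quantity such as the number of diagonals incident to $p$; (iv) identify the critical cells with the cells of $\mathcal{BD}_{g,b,n;\dots,n_i-1,\dots}$ and check the incidence relations match; (v) invoke Forman's theorem to conclude homotopy equivalence of the two supports. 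A small separate point is the base of the induction and the observation that the triple, not the $n_i$, is the only invariant surviving: this is immediate once one such move is understood, since any two admissible $(n_1,\dots,n_b)$ with all entries positive are connected by these moves.

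The main obstacle I expect is step (iii), the acyclicity of the matching, together with the bookkeeping in step (i): when one slides a germ from $p$ to $q$, several diagonals of the arrangement may simultaneously become homotopic to one another or to the merged boundary edge, so the "distinguished collapsible diagonal" must be chosen by a genuinely canonical rule (e.g. the innermost one, closest to the arc $pq$) to avoid ambiguity, and one must ensure this choice is preserved under passing to sub- and super-cells in the poset $\widetilde{BD}$ — i.e. that inserting or deleting other diagonals, or refining the ordered partition, does not change which diagonal is distinguished. Handling the interaction with the ordered-partition (barycentric) structure, so that the matching respects both incidence rules (1) and (2) for $\widetilde{BD}$, is where the argument needs the most care; once the matching is pinned down correctly, acyclicity follows from the monotonicity of the count of $p$-incident diagonals under the matched pairs.
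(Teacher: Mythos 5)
Your overall strategy---reducing to the removal of a single boundary marked point, defining the forgetful contraction map $\pi$, and collapsing onto a section by discrete Morse theory---is also the paper's strategy, and your intended critical set (cells whose arrangement contains a boundary-parallel diagonal cutting off the removed vertex, placed in $S_1$) is essentially the paper's subcomplex $\mathcal{K}$. The gap is in the matching itself. In $\mathcal{BD}$ the facets of a cell $(S_1,\dots,S_p)$ are obtained only by merging two consecutive blocks or by deleting the last block $S_p$; one cannot pass to a codimension-one face by removing a single diagonal from a block $S_j$ with $j<p$ or with $|S_j|>1$. So your rule ``match the cell with the one obtained by deleting $d^*$'' does not pair cells along edges of the Hasse diagram unless $d^*$ happens to form the terminal singleton block; for most cells the two cells you want to pair are not even comparable in the poset, and Forman's theorem cannot be invoked for such a pairing.

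Independently of that, the matching is incomplete: a cell whose arrangement does not contain $d^*$ cannot in general be matched ``by inserting it,'' because $d^*$ runs parallel to two consecutive boundary edges and crosses every diagonal emanating from the vertex it cuts off, so whenever the arrangement has germs at that vertex no cell containing $d^*$ exists at all. All such cells would stay critical, the critical complex would be much larger than the claimed copy of $\mathcal{BD}_{g,b,n;\dots,n_i-1,\dots}$, and the argument breaks down; your proposed monotone quantity does not repair this. The paper avoids both problems by a fiberwise analysis, which is the ingredient missing from your proposal: it first proves that for $x$ in an open cell $(S_1,\dots,S_r)$ of the target, $\pi^{-1}(x)$ is a combinatorial segment, whose cells are enumerated by the position at which the contracted edge is re-expanded among the germs at the new vertex, together with the optional addition of one diagonal (either duplicating a neighbouring diagonal or parallel to the boundary) and the choice of the block receiving it, and whose left endpoint lies in $\mathcal{K}$. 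The acyclic matching is then taken inside each such segment, pairing each $r$-cell with a neighbouring $(r-1)$-cell so that the unique unmatched cell over each base cell is the one in $\mathcal{K}$; acyclicity and the identification of the critical cells with $\mathcal{BD}_{g,b,n;\dots,n_i-1,\dots}$ then follow immediately, since $\pi$ maps $|\mathcal{K}|$ isomorphically onto the target. Without an analogue of this fiber-segment lemma (or a genuinely different well-defined matching on the Hasse diagram), your proof does not go through.
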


\begin{proof}
Prove that  \(|{\mathcal{BD}}|_{g, b , n; n_1-1,... ,n_b}\) is
 a deformation retract of \(|{\mathcal{BD}}|_{g, b , n; n_1,... ,n_b}\) provided that
\(n_1>1\).

 Choose an edge \(e\) with endpoints \(v',v''\) (taken in counter-clockwise order) on the boundary component \(B_1\). Consider the following forgetful poset epimorphism (the latter depends on the chosen edge).

\[
\pi: BD_{g, b , n; n_1,... ,n_b}\rightarrow BD_{g, b , n; n_1-1,... ,n_b}
\]

The defining rule is as follows.
An element of \(BD_{g, b , n; n_1,... ,n_b}\)  gives us some admissible arrangement together with a partition
\((S_1,...,S_r)\). Contract the edge \(e\) to  a  new  vertex \(v\), which replaces the former vertices \(v',v''\). We obtain a (new) collection of diagonals related to the surface with contracted edge. Some of the diagonals may become either contractible or homotopic to an edge of \(F\). Eliminate them.
Some of the diagonals may become pairwise homotopy equivalent. In each class we leave exactly one that belongs to \(S_i\) with the smallest index \(i\).
Eventually some of the sets \(S_i\) may become empty in the process. Eliminate all the empty sets keeping the order of the rest. We obtain an element from $BD_{g, b , n; n_1-1,... ,n_b}$. It is easy to check that $A<A'$ implies $\pi(A)\leq \pi(A')$, so the map is indeed a poset morphism.

 The poset morphism extends to a   piecewise linear map (we denote it by the same letter $\pi$):

\[
\pi:| \mathcal{BD}|_{g, b , n; n_1,... ,n_b}\rightarrow | \mathcal{BD}|_{g, b , n; n_1-1,... ,n_b}
\]
which is linear on each of the simplices.

The preimage of each point carries the structure of a regular cell complex; let us
show that it is a combinatorial segment.

\begin{figure}[h]
\centering \includegraphics[width=14 cm]{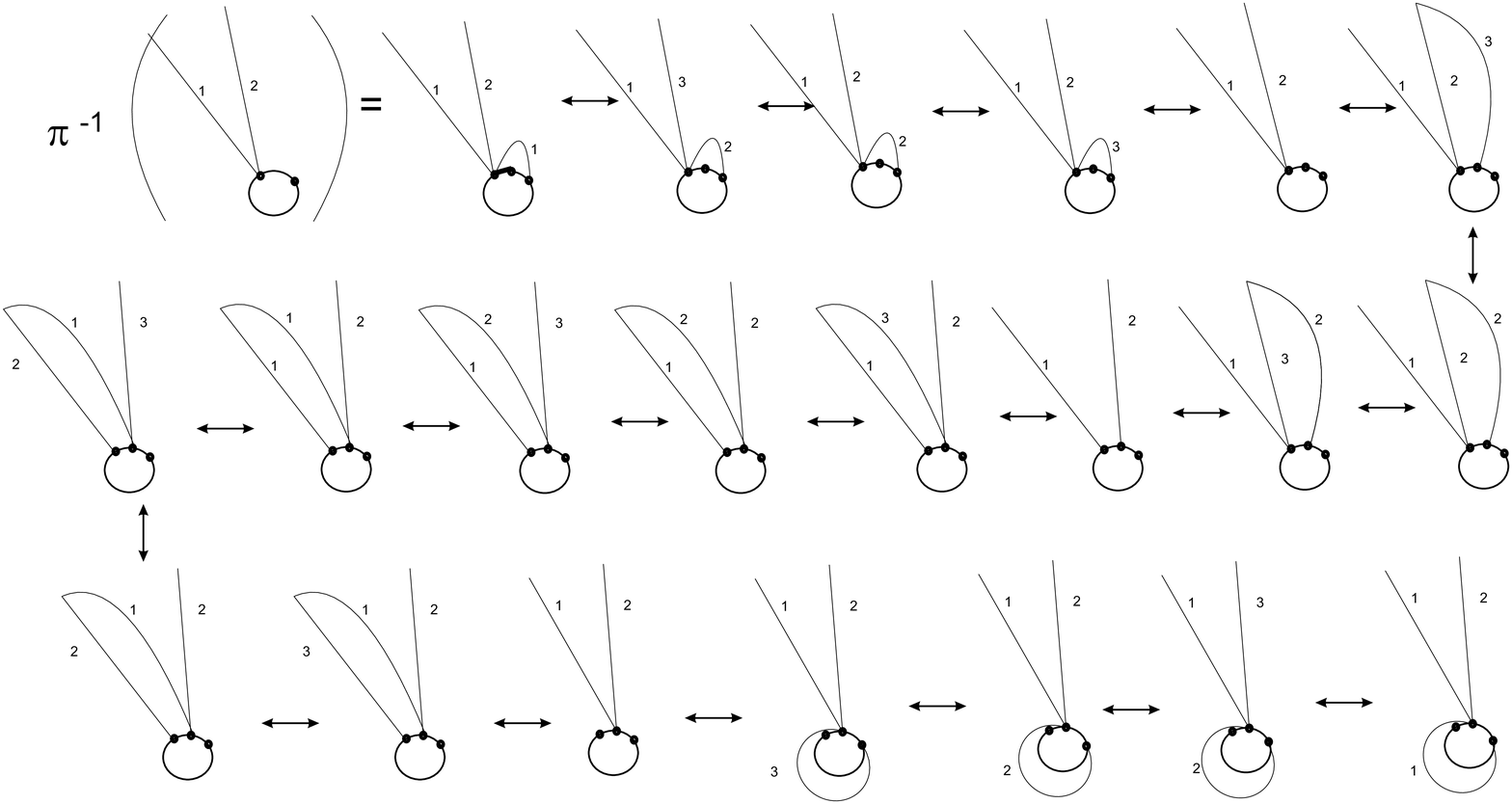}
\caption{The preimage of  a configuration is a combinatorial segment, and thus is contractible to the lefthand end of the segment. The edge which gets contracted is marked bold.}\label{segment}
\end{figure}

Take an inner  point \(
x \in \sigma^{r-1}\) of  a simplex  labeled by \((S_1,...,S_r)\). Assume that for \(\sigma^{r-1}\), in the corresponding arrangement the vertex \(v\) has \(m\) emanating germs of diagonals and two germs of incident boundary edges \(d_0,d_1,...,d_m,d_{m+1}\), coming in the counterclockwise order. Some of the germs may correspond to one and the same diagonal (or one and the same edge). Each  simplex in the preimage of \(x\)  is obtained in the following way.

 Expand the edge \(e\) either before all the germs, or after all the germs, or between between two consecutive germs.  Now we have two cases:
  \begin{enumerate}
    \item Leave the arrangement as it is (keeping the partition). Then the arrangement corresponds to some one-dimensional simplex in the preimage.
    \item Add a diagonal to the arrangement. Then the arrangement corresponds to a vertex of the preimage. Here we again have two cases:
     \begin{enumerate}
       \item The new diagonal $d'$ becomes homotopic to some $d\in S_i$  after collapsing $e$. Then we put $d'$ in the same set \(S_i\) or in any set to the right of \(S_i\).
 We also can create a separate singleton $\{d'\}$ and put it to the right of $S_i$.
       \item The new diagonal $d'$ is homotopic to a piece of the boundary component  $B_1$. In this case we put $d'$ in any of $S_i$, or create a separate singleton, see Figure \ref{segment}.
     \end{enumerate}
  \end{enumerate}

One can check that altogether we have a segment in the preimage: first check that each vertex is incident to at most two segments, then check the connectivity of the preimage.

All of the simplices in the sequence are evidently distinct (one can easily show it by comparing degrees of vertices and capacities of sets \(S_i\)).  So we indeed have a combinatorial segment.

Take a subcomplex \(\mathcal{K}\subset  \mathcal{BD}_{g, b , n; n_1,... ,n_b}\) labeled by all the arrangements which have a diagonal homotopic to the conjunction of \(e\) and the next edge of boundary in the clockwise order \(e'\) (such a curve is indeed a diagonal) lying in the set \(S_1\). Obviously such arrangement has no germs emanating from \(v'\). An example of such arrangement is the leftmost in the Figure \ref{segment}, that is, $K$ consists of all lefthandside vertices of the segments $\pi^{-1}(x)$, where $x$ ranges over  \(|\mathcal{BD}|_{g, b , n; n_1-1,... ,n_b}\). Observe that $\pi$ maps $|\mathcal{K}|$ isomorphically to \(|\mathcal{BD}|_{g, b , n; n_1-1,... ,n_b}\).

Now we can fiberwisely retract  $|\mathcal{BD}|_{g, b , n; n_1,... ,n_b}$  onto $|\mathcal{K}|$.  To correctly explain the retraction, we make use of discrete Morse theory.
Consider a matching on \({\mathcal{BD}}_{g, b , n; n_1,... ,n_b}\) by pairing in the preimage of each simplex \(\sigma^{r-1}\) each \(r\)-simplex with its neighbor in such a way that the unique non-paired cell in \(\pi^{-1}(\sigma^{r-1})\) lies in \(K\). Clearly, we have an acyclic matching.
\end{proof}

The technique of the section appeared in a disguise  in Penner's paper \cite{Pen1}, using  \textit{train tracks}  technique, where the author studies a similar (still different) complex. A reader familiar with this subject   remembers that adding a point to the boundary component in \cite{Pen1}  amounts  to taking a suspension over the arc complex  whereas in our setting we have a homotopy equivalence. This phenomenon is easy to understand by looking at the very first example  (a disk with marked points on the boundary). Our  example gives a ball (the associahedron, including the interior), whereas Penner's setting gives the boundary complex of a polytope dual to the associahedron.

\section{Contraction of boundary components} \label{SecContrBound}

 Assume that the first boundary component $B_1$ has exactly one marked point. Let us contract it and turn it to a new free vertex $v$ labeled by $n+1$.
 We have a forgetful cellular mapping
$$\pi: \mathcal{BD}_{g, b , n; 1,n_2,n_3,...,n_b} \rightarrow \mathcal{BD}_{g, b-1, n+1; n_2,... ,n_b}$$
whose defining rule is literally  the same as in the previous section, namely: a simplex in  $\mathcal{BD}_{g, b , n; 1,... ,n_b}$ corresponds to some admissible arrangement.
 After contraction  of $B_1$
some of the diagonals may become  contractible. Eliminate them.
Some of the diagonals may become homotopy equivalent. In each homotopy equivalence class we leave exactly one that belongs to $S_i$ with the smallest index $i$.

The mapping induces  a continuous mapping for the supports, which we denote by the same letter $\pi$.

\begin{thm}  \begin{enumerate}
               \item For the contraction of a boundary component with one marked point, the triple $${\pi}: | \mathcal{BD}|_{g, b , n; 1,n_2,n_3,...,n_b}\ \rightarrow \ | \mathcal{BD}|_{g, b-1, n+1; n_2,... ,n_b}$$

is homotopy equivalent  to the tautological circle bundle $L_{n+1}$ over  ${\mathcal{BD}}_{g, b-1, n+1;n_2,n_3,...,n_b}$.
\item In particular,  $${\pi}: | \mathcal{BD}|_{g, 1 , n; 1}\ \rightarrow \ | \mathcal{BD}|_{g,0, n+1;}=\widetilde{\mathcal{M}}_{g,n}$$

is  the Kontsevich's tautological circle bundle $L_{n+1}$ over \newline $\widetilde{\mathcal{M}}_{g,n}$.
               \item For the contraction of a boundary component with several  marked points,  we again have homotopy equivalence with the tautological circle bundle $L_{n+1}$.
             \end{enumerate}
\end{thm}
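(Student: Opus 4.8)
The plan is to reduce all three statements to the study of the fibers of the forgetful map $\pi$, identified as polygons, and to compare $\pi$ with the tautological circle bundle $L_{n+1}$ of Definition \ref{DefTaut}. I would first treat item (1), the case of a boundary component $B_1$ with a single marked point; items (2) and (3) then follow by combining (1) with the previous section. For (1), fix a point $x$ in an open simplex $\sigma$ of $|\mathcal{BD}|_{g,b-1,n+1;n_2,\dots,n_b}$, labeled by an arrangement $\bar A$ with partition $(\bar S_1,\dots,\bar S_r)$; near $v=v_{n+1}$ there are $m$ germs of diagonals $d_1,\dots,d_m$ in counterclockwise cyclic order. A point of $\pi^{-1}(x)$ records, in addition, the position at which the contracted boundary circle $B_1$ is ``reinserted'': it is blown up to a tiny circle around $v$, and each germ $d_j$ must attach to one of the arcs into which the marked point of $B_1$ cuts this circle. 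Since $B_1$ carries one marked point, reinserting it amounts to choosing, cyclically, where the single vertex $v'$ sits relative to the germs $d_1,\dots,d_m$ — equivalently, an interleaving of $v'$ among the $m$ germs, together with the extra combinatorial data (which new diagonals homotopic to pieces of $B_1$ are added, and into which sets $S_i$ of the partition they go, exactly as in the previous section). I would show, just as in the proof of the edge-contraction theorem, that $\pi^{-1}(x)$ is a regular cell complex which is a combinatorial circle: the $m$ ``sectors'' between consecutive germs give $m$ arcs, glued cyclically, and the subdivision within each sector is contractible by the same discrete Morse argument as before (pair each higher cell with a neighbor so the unpaired cell is the one with $v'$ inserted in that sector and no auxiliary diagonal).

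The heart of the argument is to upgrade this fiberwise description to an identification of the \emph{bundle} $\pi$ with $L_{n+1}$. I would attach lengths to diagonals using Theorem \ref{ThmLengths} (working projectively, as in the Remark, so that the lengths of diagonals meeting $v$ are defined up to scale). The fiber $(L_{n+1})_x$ over $x$ is by Definition \ref{DefTaut} the combinatorial polygon with consecutive edge-lengths $l_1,\dots,l_m$, the lengths of the germs $d_1,\dots,d_m$ at $v$ in counterclockwise order; its vertices correspond cyclically to the $m$ ``gaps'' between consecutive germs. This is manifestly the same cyclic combinatorial data as $\pi^{-1}(x)$: the vertex of the $L_{n+1}$-polygon lying between $d_j$ and $d_{j+1}$ corresponds to the (retracted) cell of $\pi^{-1}(x)$ in which $v'$ is inserted in that gap. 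I would make this into a fiberwise homotopy equivalence $\pi^{-1}(x)\simeq (L_{n+1})_x$ compatible with restriction to faces of $\sigma$ (so that it is defined on the whole total space), and check it commutes with the structure maps; since both $\pi$ and $L_{n+1}$ are, after the discrete-Morse retraction of the fibers, oriented-circle bundles over the same base with the same combinatorial monodromy along each edge of $\mathcal{BD}_{g,b-1,n+1;n_2,\dots,n_b}$, they are isomorphic circle bundles, hence $\pi$ is homotopy equivalent to $L_{n+1}$. The case when $\overline F$ is closed ($b=1$) is then item (2), since $|\mathcal{BD}|_{g,0,n+1}\simeq \widetilde{\mathcal M}_{g,n}$ by the theorem of Section \ref{SecRelation} and $L_{n+1}$ is by that same section the Kontsevich bundle.

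For item (3), a boundary component $B_1$ with $n_1>1$ marked points, I would factor the contraction: first apply the edge-contraction theorem of Section \ref{SecContrEdge} $n_1-1$ times to reduce $B_1$ to a single marked point without changing the homotopy type of the support (and, crucially, compatibly with the remaining forgetful map, so that these homotopy equivalences are fiberwise over $|\mathcal{BD}|_{g,b-1,n+1;n_2,\dots,n_b}$), then apply item (1). Concretely, one checks that contracting a chosen edge of $B_1$ and then contracting the resulting smaller $B_1$ is, up to homotopy over the common base, the same as contracting $B_1$ directly; chaining these gives the homotopy equivalence of $\pi$ with $L_{n+1}$.

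The main obstacle I anticipate is the \emph{global} compatibility: producing the fiberwise equivalence $\pi^{-1}(x)\simeq (L_{n+1})_x$ not just pointwise but as a map of bundles over $|\mathcal{BD}|_{g,b-1,n+1;n_2,\dots,n_b}$, i.e.\ verifying that the cyclic labelings of fibers glue consistently across the simplices of the base and that the monodromy around each edge matches. This requires a careful bookkeeping of how the cyclic order of germs at $v$ changes under an edge-flip in $\overline F$ near $v$ — precisely the data that Definition \ref{DefTaut} encodes for $L_{n+1}$ — and confirming that the reinsertion data for $B_1$ transforms in exactly the same way. Everything else (regularity of the fibers, the discrete Morse matching, contractibility of the sectors) is a direct rerun of the edge-contraction argument of the previous section.
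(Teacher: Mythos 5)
Your proposal follows essentially the same route as the paper: reduce everything to item (1), identify the fiber over an interior point of each base simplex as a combinatorial circle (insertions of $B_1$ into the gaps between germs at $v$ as vertices, duplications of the adjacent diagonals, with their possible positions in the partition, as the subdivided arcs), metrize it by the diagonal lengths of Theorem \ref{ThmLengths} so that it coincides with the polygon fiber of Definition \ref{DefTaut}, and verify compatibility as the point crosses faces of the base simplex; items (2) and (3) then follow from Sections \ref{SecRelation} and \ref{SecContrEdge} exactly as you indicate. The only slip is cosmetic: in this setting the extra diagonals appearing in a fiber are duplicates of the two diagonals neighbouring the reinserted boundary component (curves that become homotopic to an existing diagonal after contraction), not curves homotopic to pieces of $B_1$ --- with a single marked point such a curve would be homotopic to the boundary edge and hence is not a diagonal.
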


Proof. Since (1) implies (2) and (3), it suffices to prove (1).

  Take a  simplex $\sigma \in \mathcal{BD}_{g, b-1, n+1; n_2,... ,n_b}$ labeled by
$(S_1,...,S_r)$. Assume that for $\sigma$, in the corresponding arrangement  $v$ has $m$ emanating germs of diagonals $d_1,...,d_m$, coming in the counterclockwise order.  Some of the germs may correspond to one and the same diagonal.
Consider  the  preimage of  a point $x$ lying in  the interior of $\sigma$. The preimage carries the structure of a regular cell complex.  The simplices in the preimage are obtained by the following procedure: place the new boundary component to the vertex $v$. Either leave it as it is,
or add a curve which duplicates one of the two neighbor diagonals  $d\in S_i$.  Put the new diagonal $d'$ either in the same set $S_i$ as $d$, or   to any of the sets with bigger indices, or as a singleton to any place to the right of $S_i$.

\medskip

Important remark: might happen   that the two neighbor germs are the germs of one and the same diagonal.  Then the diagonal can be duplicated in two ways, so this  case does not create an exception in our construction.

\medskip

It is easy to see that ${\pi}^{-1}(x)$ is a combinatorial circle.  Figure  \ref{Circle} depicts the preimage ${\pi}^{-1}(x)$ for the case when the collapsed boundary component has exactly two emanating diagonals, one from $S_1$, and the other from $S_2$.

The generic case is captured by the following observation: each preimage is connected, it carries a structure of one-dimensional cell complex,  each vertex of which has exactly two adjacent edges.

 \medskip

\textbf{Grid on the circle.}
Let us  explicitly describe the combinatorics of the circle in the preimage.
Assume that a point $x$ lies  in a cell  of the base labeled by $(S_1,...,S_r)$.  Assume that the new vertex has $m$ emanating germs of edges
$e_i\in  S_{j_i}$.
Take an oriented circle  and construct the following grid.  \begin{enumerate}
                            \item Put $m$ bold points on the circle. The latter correspond to pairs of neighbor germs of diagonals emanating from $v$.
                            Therefore, each segment  between  two consecutive    points corresponds to a germ.
                            \item For each $i=1,...,m$, put $2(r-j_i)+1   $  points on the corresponding segment.
                          \end{enumerate}

 We look at the circle with points as at a cell complex. In view of the above discussion
 this cell  complex is combinatorially isomorphic to $\pi^{-1}(x)$.  A cell of the grid tells us (1) between which germs the boundary component should be inserted, (2) which emanating edge should be duplicated, and (3) what is the partition on the new set of edges.

\begin{figure}[h]
\centering \includegraphics[width=10 cm]{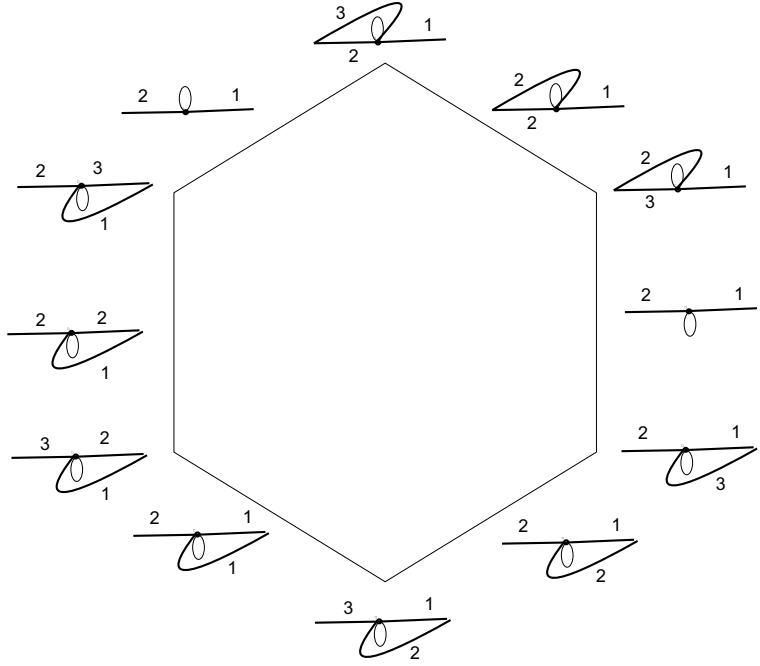}
\caption{The preimage is the combinatorial hexagon.  We depict here the corresponding arrangements locally, near the first boundary component, since the rest of the diagonals remains unchanged.}\label{Circle}
\end{figure}

 We are almost done; however, we need a metric combinatorial circle to ensure that we have the tautological  circle bundle.  Take the circle with the  grid and leave the bold points only.  They cut the circle into segments that bijectively correspond to germs  of diagonals emanating from $v$.  Assign to each of the edges the length of the corresponding diagonal,
 see Fig. \ref{Bundle}.

The behaviour of the metric circle $\pi^{-1}(x)$ when the point $x$ moves on the base is captured by the following observations:
if $x$ stays in one and the same simplex of $\mathcal{BD}$, the combinatorics of the metric circle does not change.
If $x$ meets a face of the simplex which corresponds to a coarser partition of the same diagonal arrangement (that is, no diagonals get removed), then again the combinatorics of the metric circle does not change.  If $x$ meets a face of the simplex which corresponds to a  removal of some diagonals that are not incident to $v$, then again the combinatorics of the circle does not change. Finally, removal of  diagonals that are  incident to $v$  means that corresponding lengths $l(d_i)$ tend to zero, and eventually the corresponding edges of the circle collapse.

\qed

\begin{figure}[h]
\centering \includegraphics[width=8 cm]{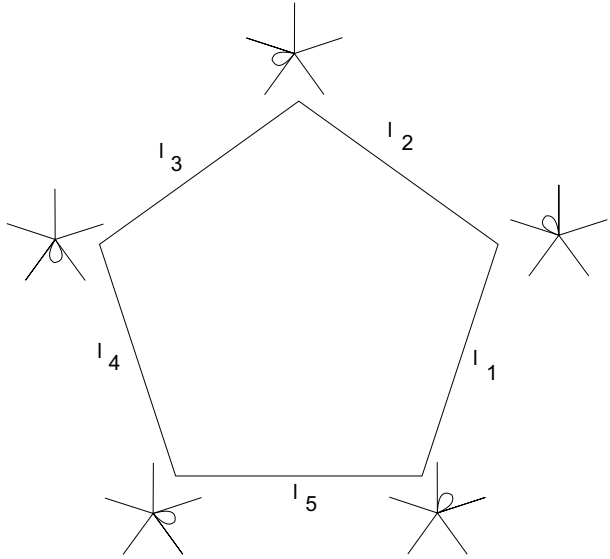}
\caption{The fiber with assigned lengths.}\label{Bundle}
\end{figure}

The above approach is very much related to $(S^1)^b$  action on the arc complex described in \cite{Pen1}. The description uses train tracks  technique.

\medskip

 Assume  now  that we contract  $k$ boundary components  to  $k$ new free vertices. We again have a well-defined forgetful map $$\pi: \mathcal{BD}_{g, b , n; 1,1,1,...,1} \rightarrow \mathcal{BD}_{g, b-k, n+k; 1,... ,1}.$$

\medskip
\begin{thm}  For the contraction of $k$ boundary components at a time, the bundle
$${\pi}: | \mathcal{BD}|_{g, b , n; 1,1,1,...,1}\  \rightarrow \ | \mathcal{BD}|_{g, b-k, n+k; 1,... ,1}$$
  is homotopy equivalent to Whitney sum of the tautological $S^1$-bundles  $L_{n+1}\oplus... \oplus L_{n+k}$  on $\mathcal{BD}_{g, b-k, n+k; 1,... ,1}.$
\end{thm}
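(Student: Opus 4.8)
The plan is to reduce the $k$-fold contraction to an iteration of the single-boundary-component case already established in the previous theorem, and to identify the resulting iterated fibration with a Whitney sum. First I would factor the forgetful map
\[
\pi: |\mathcal{BD}|_{g, b , n; 1,1,\ldots,1}\ \rightarrow \ |\mathcal{BD}|_{g, b-k, n+k; 1,\ldots ,1}
\]
as a composition $\pi = \pi_1 \circ \pi_2 \circ \cdots \circ \pi_k$, where $\pi_j$ contracts the $j$-th boundary component. Each intermediate space $|\mathcal{BD}|_{g, b-j, n+j; 1,\ldots,1}$ is again a diagonal complex of a bordered surface (until the last step, if $b=k$), so part (1) of the preceding theorem applies verbatim to each $\pi_j$: it is homotopy equivalent to the tautological circle bundle $L_{n+j}$ over the base $\mathcal{BD}_{g, b-j, n+j; 1,\ldots,1}$. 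Thus $\pi$ is homotopy equivalent to an iterated circle bundle, i.e.\ a tower of $S^1$-fibrations.

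The second step is to show that this tower is in fact (homotopy equivalent to) a single Whitney sum $L_{n+1}\oplus\cdots\oplus L_{n+k}$ rather than merely a $k$-stage iterated bundle. For this I would argue that the circle fibers in the different stages are \emph{independent}: the fiber of $\pi_j$ over a point records only the metric data of the diagonals emanating from the new vertex $v_{n+j}$, and a diagonal emanating from $v_{n+j}$ cannot emanate from $v_{n+i}$ for $i\ne j$ unless it is a diagonal joining the two (which contributes a germ to each independently, as in the ``important remark'' about a diagonal duplicated in two ways). Concretely, over a simplex $\sigma \in \mathcal{BD}_{g, b-k, n+k; 1,\ldots,1}$ the total preimage $\pi^{-1}(\sigma)$ is a product of $k$ combinatorial circles, each carrying the grid-and-metric structure described in the single-contraction proof, and these product structures are compatible across faces of $\sigma$ exactly as in that proof (collapsing an edge of the $j$-th circle when a diagonal incident to $v_{n+j}$ is removed, and leaving the other circles unchanged). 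Hence the clutching data of the tower splits as a direct sum, and the total space is the unit bundle of $L_{n+1}\oplus\cdots\oplus L_{n+k}$ up to homotopy equivalence over the base.

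The main obstacle I expect is the bookkeeping in the last step: one must verify that the partition data $(S_1,\ldots,S_r)$ — which couples all the diagonals of the whole arrangement, including those emanating from distinct new vertices — does not obstruct the product decomposition of the fibers. The point is that in $\mathcal{BD}_{g, b-k, n+k; 1,\ldots,1}$ the relevant combinatorial choices (into which sets $S_i$ the duplicated diagonals go, and with what ordering) are made independently for each of the $k$ new vertices once a simplex of the base is fixed; the grid construction on each circle uses only the indices $j_i$ of the germs at \emph{that} vertex. So the fiberwise cell complex is genuinely a product of grids, and the discrete-Morse matching that retracts each factor circle onto its base vertex (exactly as in the single-contraction proof) can be performed simultaneously and acyclically on all $k$ factors, because matched pairs in different factors involve disjoint sets of diagonals. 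This gives the fiberwise homotopy equivalence with $\bigoplus_j L_{n+j}$, and base-change invariance of Whitney sum then yields the claim. A clean alternative for this last point is to invoke that a product of circle bundles over a common base, with fiberwise-compatible trivializations, is classified by the sum of the individual Euler classes, which is precisely the Euler class of the Whitney sum.
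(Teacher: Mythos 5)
Your reduction to an iterated tower of circle bundles is a reasonable starting point, but the step where you identify that tower with the Whitney sum contains the actual content of the theorem, and your justification of it has a genuine gap. You assert that over a simplex $\sigma$ of the base the preimage $\pi^{-1}(\sigma)$ is ``genuinely a product of grids,'' because the combinatorial choices at the $k$ new vertices are made independently. This is false as stated. The partition data couples the new diagonals: if two duplicated diagonals $d_1$ (at $v_{n+1}$) and $d_2$ (at $v_{n+2}$) are both inserted after the same set $S_i$, one must additionally record their relative order or merge them into $\{d_1,d_2\}$, so the cell of the product of grids is subdivided (in the paper's proof this is Case 1, and for general $k$ the cubes of the product are refined by duals of permutohedra). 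Worse, when the arrangement contains a diagonal joining two of the contracted boundary components, the ways of duplicating it at the two ends genuinely interfere; the corresponding \emph{exceptional fragment} of the fiber is not a refinement of a product cell at all, and one has to prove separately that it is a topological disc agreeing with the torus grid on its boundary. Your parenthetical appeal to the ``important remark'' does not cover this: that remark concerns a single diagonal both of whose germs sit at the \emph{same} new vertex, not a diagonal connecting two distinct contracted components. The paper's proof exists precisely to handle these couplings, via a fragment-by-fragment analysis and a discrete Morse matching that contracts the ``movable'' duplicated diagonals inside each fragment; without an argument of this kind your product decomposition, and hence the splitting of the clutching data, is unproven.

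Two smaller points. The discrete Morse matching you invoke (``retract each factor circle onto its base vertex, as in the single-contraction proof'') does not appear in the boundary-contraction proof — that proof identifies the fiber with a metric circle directly; the matching you want is the different one the paper builds inside exceptional fragments. And your ``clean alternative'' via Euler classes is off: here the Whitney sum means the fiberwise product of the circle bundles (a torus bundle), which is not classified by a single Euler class, and in any case the Euler class of a genuine direct sum of plane bundles is the \emph{product}, not the sum, of the summands' Euler classes. If you want to salvage your factorization strategy, you would instead need to show that at each stage the intermediate bundle is the pullback of $L_{n+j}$ from the final base along the projection, which again runs into the same interference phenomena and is not addressed in your write-up.
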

Proof.   Assume we have a number of combinatorial $S^1$-bundles over one and the same base complex $\mathcal{BD}$. Then their Whitney sum carries a natural cell structure: each fiber decomposes into products of the cells of the summands. In other words, we obtain a \textit{grid on the torus}  on each of the fibers.  Each cell of such a grid is a (combinatorial) cube.

Let us examine the preimage  $\pi^{-1}(x)$ of a point $x$ lying on the base in a cell labeled by $(S_1,...,S_r)$.
It also carries some combinatorial  structure  to be compared with the grid on the torus.

Assume that the contraction $B_1$,...,$B_k$ gives new free marked points $v_1,...,v_k$.
The preimage can be subdivided in \textit{fragments}:
 each  fragment of the preimage of a point $x\in (S_1,...,S_r)$  corresponds to a choice of germs incident to $v_1,...,v_k$, one germ per a vertex.
 The fragment corresponds to (1) placing boundary components next to the chosen germ (either to the left or to the right),  (2) duplicating some of the diagonals containing the chosen germs  and (3) deciding where to place the new diagonals  in the partition  $S_1,...,S_r$.
 Items (1) and (2) are depicted in Figure \ref{FigFrag}.

\begin{figure}[h]
\centering \includegraphics[width=10 cm]{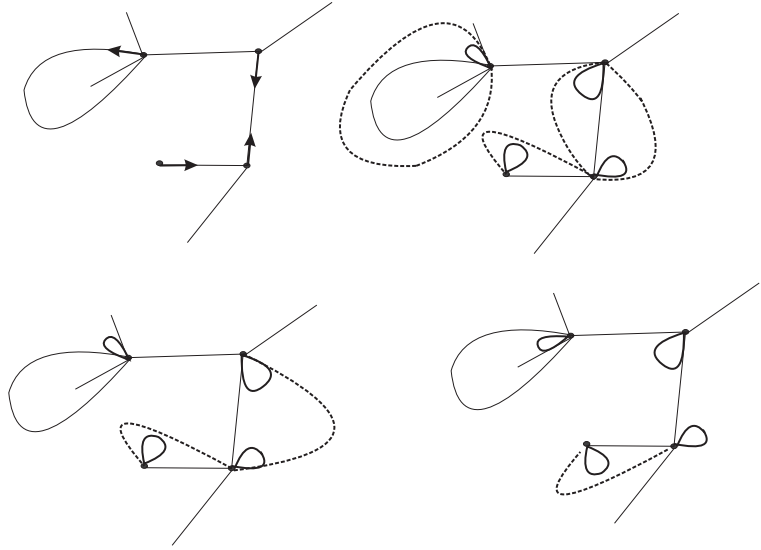}
\caption{The choice of emanating germs determines a fragment of $\pi^{-1}(x)$. We depict here some of the possible placements of boundary components (bold circles) and some of the possible ways of adding new diagonals (bold dashed lines).}\label{FigFrag}
\end{figure}

In each fiber of the Whitney sum $L_{n+1}\oplus... \oplus L_{n+k}$ , the preimage of $x$  is also subdivided in analogous number of fragments constituting the grid on the torus. As we discussed above, the interiors of the fragments are topologically open balls.
We shall show that the interior of each  fragment for $$\pi: \mathcal{BD}_{g, b , n; 1,1,1,...,1} \rightarrow \mathcal{BD}_{g, b-k, n+k; 1,... ,1}$$
is also a topological ball, so it is possible to  identify the fibers of the two bundles.

If the point $x$ moves on the base to the boundary of the cell, the way of degenerating the boundaries of the fragments is one and the same for the Whitney sum $L_{n+1}\oplus... \oplus L_{n+k}$  and for $\pi: \mathcal{BD}_{g, b , n; 1,1,1,...,1} \rightarrow \mathcal{BD}_{g, b-k, n+k; 1,... ,1}$.   Therefore
we have an isomorphism between the bundles.

\medskip
\textbf{Case 1:  contraction of two boundary components, no connecting edges.}

Assume that we contract  two boundary components  to new free marked points $v_1$ and $v_2$.
If no diagonal in $A=\bigcup S_i$ connects $v_1$ and $v_2$, the preimage carries the cell structure of the above described product of the two grids related to $L_{n+1}$ and $L_{n+2}$  with one exception  which we describe below. Cells of the product corresponds to adding new diagonals, say, $d_1$ and $d_2$.  Each of the diagonals is added either in some of $S_i$, or after one of $S_i$.  If the two diagonals are added after one and the same $S_i$, the corresponding two-cell of the product of grids gets partitioned  further.
This additional partition   consists of two triangular cells  $(S_1,...,S_i,\{d_1\}, \{d_2\}, S_{i+1},...,S_r)$  and  $(S_1,...,S_i,\{d_2\},\{d_1\}, S_{i+1},...,S_r)$, and   the diagonal segment
\newline $(S_1,...,S_i,\{d_1,d_2\}, S_{i+1},...,S_r)$.

\medskip
\textbf{Case 2:  contraction of $k$ boundary components, no connecting edges.}

If more than two boundary components are contracted, and no diagonal in $A=\bigcup S_i$ connects $v_i$ and $v_j$, the preimage carries the cell structure which refines the  grid on the torus. Namely, some of the cubes of the product are partitioned further.
It is easy to see that each of the partitions is combinatorially isomorphic to a product of duals to \textit{permutohedra.}\footnote{The permutohedron $P_n$ is a polytope whose face poset is combinatorially isomorphic  to the poset of linearly ordered partition of the set $\{1,...,n\}$.}

\medskip
\textbf{Case 3: two boundary components, one connecting edge.}

 If the set $A$ contains a diagonal connecting   $B_1$ and $B_2$  (say, $d$), existing  ways of duplicating $d$ begin to interfere. Therefore we have another type of exception called  \textit{elementary exceptional fragment}. The latter corresponds  to placing the two new boundary components next to  the connecting diagonal $d$. In this case the Figure \ref{ExFragment}  depicts the combinatorics of the exceptional fragment for the case  $r=1$, that is, for  $A=S_1$. For $r>n$ we have a refinement of  the cell structure depicted in the figure, but in any case  it is a topological disc.  It is important that on its boundary, the exceptional fragment coincides with the grid on the torus.

\begin{figure}[h]
\centering \includegraphics[width=10 cm]{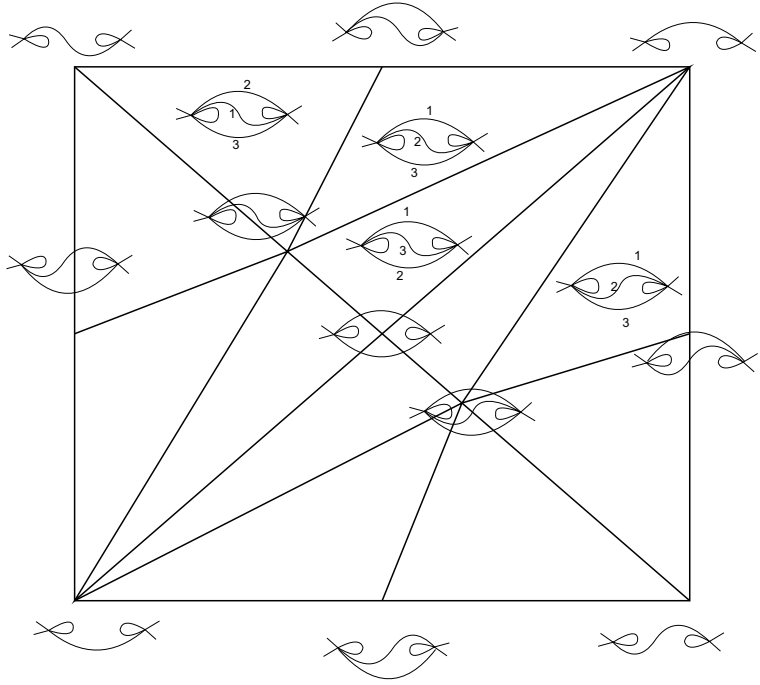}
\caption{Elementary exceptional fragment for the case $r=1$. We depict here all the labels of the vertices (all associated numbers equal one), and four labels of two-cells. The numbering  on the curves indicates the partition. }\label{ExFragment}
\end{figure}

\medskip

\textbf{Example. } It is instructive to look at the following ''limit case'':  one of the vertices (say, $v_1$)  has a unique emanating diagonal  in the set $A$, and this  diagonal leads to $v_2$. Then $v_2$ necessarily has emanating diagonals that  lead to other vertices. It is easy to see that in this case the closure of the exceptional fragment
patches to itself
      by identifying a pair of opposite sides.  However,  the sides are ''non-exceptional'', and we still have a contractible exceptional part.

\medskip

\medskip
 The \textbf{generic case}  ($k$ boundary components, several connecting edges)
 reduce this case to the above described ones.
There are several exceptional fragments. We need to prove that each of them  is an open topological disc.

 Let us fix a choice of a fragment, that is, a choice of emanating germs incident to $v_1,...,v_k$.
 The cells of the fragment correspond to all possible ways of duplication of the chosen diagonals.

 A new diagonal is called \textit{movable}  if it duplicates some $d\in S_i$, but is placed \textbf{not} in $S_i$.
 For a fixed choice of germs and new diagonals, let  us order all the movable diagonals in such a way that
 \begin{enumerate}
   \item First come diagonals that duplicate $d\in S_i$  with smaller values of $i$, and
   \item If a diagonal is duplicated by more than one new movable diagonals, the latter come one after another.
 \end{enumerate}
 Assume that the movable diagonals are $d_1,...,d_m$, and $d_i \in S_{j_i}$.
 Let us set a discrete Morse function on the cells of the fragment:
 \begin{itemize}
   \item Step 1.  We match   $(*,S,\{d_1\},*)$ with $(*,S\cup \{d_1\},*)$  if $S\neq S_{j_1}$.
   Here $*$ denotes any sequence of sets.
   For instance, if $d_1$ duplicates a diagonal from $S_1$, we match $(S_1,\{d_2\}, \{d_1\},S_2,S_3)$  and $(S_1,\{d_2 \cup d_1\},S_2,S_3)$
   but do not match $(S_1,\{d_1\},\{d_2\},S_2,S_3)$  and $(S_1 \cup \{d_1\},d_2,S_2,S_3)$.

   After Step 1 comes Step 2,  Step 3, etc. The defining rule is:
   \item Step $p$. We match   $(*,S,\{d_p\},*)$ with $(*,S\cup \{d_p\},*)$  if

   \begin{enumerate}
     \item $S\neq S_{j_p}$, and
     \item These two cells are not matched on steps $1,2,...,p-1$.
   \end{enumerate}
 \end{itemize}

 According to R. Forman's theory, all the cells that are matched can be contracted. The above described discrete Morse function orders the movable diagonals.
 For the remaining cells all the movable diagonals appear in singletons and in the chosen order.

 For instance one could have $(S_1,\{d_1\},\{d_2\},S_2,\{d_3\},S_3)$  (provided that $d_1$ and $d_2$ duplicate some diagonals from $S_1$, and
 $d_3$ duplicates a diagonal from $S_2$). Another example: one  has neither $(S_1,\{d_2\},\{d_1\},S_2,\{d_3\},S_3)$  nor  $(S_1,\{d_2,d_1\},S_2,\{d_3\},S_3)$  since these both of them are matched.

 So the cells of the new combinatorial structure on the fragment  are determined by the first two items only:  (1) placing boundary components next to the chosen germ (either to the left or to the right),  (2) duplicating some of the diagonals containing the chosen germs.
 This cell structure equals the join of a number of balls and elementary exceptional fragments.
 Since we have seen that the latter are also balls, the claim is proven.
\qed

\section{Combinatorial formula for the Chern class of a tautological bundle}\label{SecChern}
Section \ref{SecContrBound} provides a  combinatorial model for  the tautological $S^1$-bundle:  we have  triangulated base and triangulated total space of the bundle such that the projection is a simplicial map.
{Thus the local combinatorial formulae
for the first Chern class and its powers (see  Igusa \cite{Igusa} or Mnev and Sharygin \cite{Mnev}) are applicable.}

Let us start with  some auxiliary constructions.
An  \textit{ oriented necklace } (or a \textit{necklace}, for short)  on letters $1,...,k+1$ is an orbit of a word (on the same letters)
under cyclic permutations. One thinks of a necklace as of  a number of beads colored by numbers $1,...,k+1$ on an oriented cyclic thread.

Assume that $k+1$ is odd, and a necklace $\nu$ is fixed.
Let $N_{odd}(\nu)$  (respectively, $N_{even}(\nu)$) be the number of ways to choose exactly $k$ beads of the necklace $\nu$, one bead out of each of the colors, in such a way that the resulted permutation of the chosen beads is odd (respectively, even).\footnote{Although we have a cyclic permutation, its parity is well-defined since $k+1$ is odd. It does not depend on the way one cuts the circle to get a string.}
Set  $$p(\nu)=N_{even}(\nu)-N_{odd}(\nu) ,$$   and set $N_i(\nu)$ to be the  number of beads colored by $i$.

A $k$-dimensional  simplex  $\sigma=\sigma^k$ in $\mathcal{BD}_{g, b-1, n+1; n_2,... ,n_b}$  is labeled by some $(S_1,...,S_{k+1})$.
Therefore the germs of edges emanating from  the first free marked point  $v_1$  have associated numbers, and thus give a necklace $\nu(\sigma)$ on letters $1,...,k+1$.  Although  some of the colors might be missing in a particular necklace  $\nu(v_1,\sigma)$, the color ''$1$'' is always present.

\begin{prop}
  \begin{enumerate}
    \item The cochain
$$Ch(\sigma^2)=\frac{-p(\nu(v_1,\sigma))}{2N_1(N_1+N_2)(N_1+N_2+N_3)}, $$

where $N_i=N_i(\nu(v_1,\sigma))$,
represents the first Chern class of the circle bundle
  $$\pi: \mathcal{BD}_{g, b , n; 1,n_2,n_3,...,n_b} \rightarrow \mathcal{BD}_{g, b-1, n+1; n_2,... ,n_b},$$
  which exgibits a combinatorial model for the tautological bundle $L_1$.
\item
In the same notation, the cochain
$$Ch^h(\sigma^{2h})=\frac{(-1)^h h! \cdot p(\nu(v_1,\sigma))}{(2h)!\cdot N_1(N_1+N_2)(N_1+N_2+N_3)...(N_1+N_2+...N_{2h+1})}$$
represents the $h$-th power of the first Chern class.

\end{enumerate}

\end{prop}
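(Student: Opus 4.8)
The plan is to invoke the Mnev--Sharygin local combinatorial formula for the powers of the first Chern class of a simplicial circle bundle, and to identify the combinatorial data it requires with the necklace data attached to $v_1$. Recall that Section \ref{SecContrBound} gives us a genuine simplicial circle bundle: over a $k$-simplex $\sigma^k$ of the base $\mathcal{BD}_{g,b-1,n+1;n_2,\ldots,n_b}$ labeled by $(S_1,\ldots,S_{k+1})$, the fiber $\pi^{-1}(x)$ is the metric circle whose edges are the germs emanating from $v_1$, with the grid-refinement described there. The key point is that the total space, the base, and the projection are all explicitly triangulated, and the fiber combinatorics over $\sigma^k$ is completely encoded by the cyclic word $\nu(v_1,\sigma)$ recording, for each germ in cyclic (counterclockwise) order, the index $i$ of the block $S_i$ to which the corresponding diagonal belongs. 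So the first step is simply to write down, for the chosen ordering of vertices of $\sigma^k$, the "local formula" input: a section of the circle bundle over the vertices, or equivalently the monodromy-type data that the Mnev--Sharygin cochain eats.

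Next I would carry out the bookkeeping that turns the general local formula into the stated closed form. The Mnev--Sharygin formula expresses $Ch^h(\sigma^{2h})$ as an alternating sum over ways of selecting, for each of the $2h+1$ vertices of $\sigma^{2h}$ (equivalently, for each block $S_1,\ldots,S_{2h+1}$), one point of the circle lying in the "layer" associated to that vertex, weighted by the sign of the resulting cyclic permutation and normalized by the product of the cumulative fiber-lengths. Because the circle over $\sigma^{2h}$ is subdivided so that the number of grid-points on the segment corresponding to a germ of color $j$ is $2((2h{+}1)-j)+1$ (the grid description in Section \ref{SecContrBound}), the "layer" of block $S_i$ meets exactly the germs of colors $\le i$, and choosing one representative point per color, one per layer, is exactly the combinatorial act of choosing $k=2h$ beads of $\nu$, one of each color present, as in the definition of $N_{odd},N_{even}$. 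The alternating count over these choices collapses to $p(\nu(v_1,\sigma))=N_{even}-N_{odd}$ up to the normalization, and the denominators $N_1(N_1+N_2)\cdots(N_1+\cdots+N_{2h+1})$ arise as the telescoping product of cumulative edge-lengths (each $N_1+\cdots+N_j$ being the number of germs with color $\le j$, i.e. the measure of the first $j$ layers). The constant $(-1)^h h!/(2h)!$ is the universal normalization constant in the Mnev--Sharygin formula for the $h$-th power; for $h=1$ this is $-1/2$, recovering part (1). The well-definedness of the sign (parity of a cyclic permutation on an odd number of letters) is exactly the footnote remark, so $p(\nu)$ is legitimately defined.

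Finally I would check independence of the choices made: the formula must not depend on the linear order of the vertices of $\sigma^{2h}$ within its labeling (only the cyclic necklace matters), nor on how one cuts the cyclic thread to read off a permutation. The first follows because reordering the blocks $S_i$ both permutes the layers and permutes the ambient simplex orientation consistently, and the Mnev--Sharygin cochain is by construction a simplicial cochain (depends only on the oriented simplex); the second is the footnote observation that parity of a cyclic permutation of an odd number of letters is cut-independent. One also checks the cocycle condition / that this represents the correct class by appealing directly to the cited theorem of Mnev--Sharygin (or Igusa) applied to our simplicial model — no separate verification is needed since the model was built to satisfy its hypotheses. The main obstacle I anticipate is purely combinatorial: matching the grid-refinement multiplicities $2((2h{+}1)-j)+1$ against the layer-selection count in the local formula so that the alternating sum of signs genuinely telescopes to $p(\nu)$ with precisely the denominator $\prod_{j=1}^{2h+1}(N_1+\cdots+N_j)$ and the constant $(-1)^h h!/(2h)!$ — i.e. getting every factor of $2$, every factorial, and every sign exactly right rather than up to an unknown scalar. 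Everything else is an application of machinery already in place.
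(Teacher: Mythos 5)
Your overall route is the same as the paper's: treat the map of Section \ref{SecContrBound} as a triangulated circle bundle and evaluate the Mnev--Sharygin local formula on it. However, the step you explicitly postpone (``getting every factor of $2$, every factorial, and every sign exactly right rather than up to an unknown scalar'') is precisely the mathematical content of the proposition, and your sketch of that step is not correct as stated. The necklace that the local formula actually evaluates is not $\nu=\nu(v_1,\sigma)$ but its \emph{fattening} $F(\nu)$: the top-dimensional simplices of $\pi^{-1}(\sigma^{2h})$ lying over the segment of a germ whose diagonal belongs to $S_i$ are indexed by the place (after $S_i$, after $S_{i+1}$, \dots, after $S_{2h+1}$) where the duplicating diagonal is inserted as a singleton block, \emph{and} by the side (left or right of the old diagonal) on which it is placed; so each bead $i$ of $\nu$ is replaced by the cluster $2h{+}1,\dots,i{+}1,i,i,i{+}1,\dots,2h{+}1$. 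Consequently the local formula outputs $p(F(\nu))$ and $N_i(F(\nu))$, and your assertion that choosing one point per ``layer'' is \emph{exactly} choosing one bead of each color of $\nu$ overlooks the many selections in $F(\nu)$ that take two beads from a single cluster, or beads from clusters attached to repeated letters; these correspond to nothing counted in $p(\nu)$ and must be shown to cancel.

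That cancellation is the missing ingredient and is where the paper does its real work: selections with two or more beads in one cluster, and selections using clusters of coinciding letters, are grouped into families whose signed contributions vanish; the surviving selections take one of the two central beads $i$ from clusters of pairwise distinct letters, which yields $p(F(\nu))=2^{2h+1}\,p(\nu)$, while a direct count gives $N_i(F(\nu))=2\bigl(N_1+\cdots+N_i\bigr)$. Only after these two identities do the powers of $2$ cancel and the stated denominator together with the constant $(-1)^h h!/(2h)!$ of the cited formula emerge. Without this lemma your argument pins the cochain down only up to an undetermined normalization, which is weaker than the claim. The remaining points of your plan (the model is genuinely simplicial, the parity of the cyclic selection is well defined because $2h+1$ is odd, Mnev--Sharygin applies as a black box) agree with the paper and are fine.
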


Proof.
A \textit{fattening } $F(\nu)$ of a necklace $\nu$ is a  new necklace obtained from $\nu$  by replacing each bead
''$i$'' by  the cluster of beads \newline  ''$k+1,k,...,i+1,i,i,i+1,...,k,k+1$''. \ \  We call it the\textit{ cluster associated with} $i$.

We shall prove the claim (1); then the proof of (2) is routine.
So assume that a two-dimensional simplex $\sigma^2=(S_1,S_2,S_3)$ is fixed in the base.
There is a natural ordering on its vertices:  $S_1, \ S_1\cup S_2,\ S_1\cup S_2\cup S_3$.
To apply the formula from \cite{Mnev}, one should look at  three-dimensional simplices in its preimage. Each of them is obtained
by duplicating one of the emanating germs.
For instance, duplication of a germ labeled by $1$, one gets $(S_1,\{d\},S_2,S_3)$, $(S_1,S_2,\{d\},S_3)$ , $(S_1,S_2,S_3,\{d\})$.
This sequence of $3$-simplices in the preimage yields sequence of beads $1,2,3$ in the associated necklace.  Since the new diagonal can be added either to the left or to the right of the old one, we have  a cluster of beads $3,2,1,1,2,3$.
One concludes that we arrive at the fattening of the necklace $\nu= \nu(v_1,\sigma)$.
According to the Mnev-Sharygin formula  \cite{Mnev}, we need to compute $p(F(\nu))$, and $N_i(F(\nu))$.
Clearly,

$$N_1(F(\nu))=2N_1(\nu), \ N_2(F(\nu))=2N_1(\nu)+2N_2(\nu),$$$$\ N_3(F(\nu))=2N_1(\nu)+2N_2(\nu) +2N_3(\nu).$$

Once we prove that $$p(F(\nu))=p(\nu)\cdot 2^3,\ \ \ \ \ (*)$$
the statement of the proposition follows.

The proof is based on two observations:

(1) When counting $p(F(\nu))$ one may choose beads from different clusters only.  Indeed, the choices of (at least) two beads from one and the same cluster can be grouped into collections  such that the contribution to $p$ of a collection vanishes.

Examples:  (a) $...,3,\textbf{2},\textbf{1},1,2,3,...$  is grouped with $...,3,{2},\textbf{1},1,\textbf{2},3,...$

(b) $...,3,\textbf{2},{1},1,2,\textbf{3},...$, \ \ \ \  $...,3,{2},{1},1,\textbf{2},\textbf{3},...$\ \ \ \ \newline $...,\textbf{3},\textbf{2},{1},1,2,{3},...$, and  $...,\textbf{3},{2},{1},1,\textbf{2},{3},...$  are grouped.

(2)  When counting $p(F(\nu))$ one may choose beads from different clusters associated to all different letters $1,2,3$ only.
Indeed, other choices can be grouped into mutually cancelling collections.
Therefore, from a cluster associated with $i$  we take one of the two beads $i$. This proves $(*)$.\qed

\bigskip

{If $b=1$, the above theorem gives a formula for the Chern class in the classical setting. We remind the reader that M. Kontsevich gave an expression for it in terms of a differential $2$-form, see \cite{Kon}. }

\end{document}